\newtheorem{theorem}{Theorem}[section]
\newtheorem{corollary}[theorem]{Corollary}
\newtheorem{lemma}[theorem]{Lemma}
\newtheorem{proposition}[theorem]{Proposition}
\theoremstyle{definition}
\newtheorem{definition}[theorem]{Definition}
\theoremstyle{remark}
\newtheorem{remark}[theorem]{Remark}
\newtheorem{example}[theorem]{Example}
\numberwithin{equation}{section}
\newcommand{\Hilb}{\mathcal{H}}
\newcommand{\alfa}{\mathbb{A}}
\newcommand{\env}{C^*_e}
\newcommand{\masa}{\mathbb{D}}
\newcommand{\vhta}{\mathbb{B}}
\newcommand{\cpct}{\mathcal{K}}
\newcommand{\mmax}{\mathbb{M}}
\begin{document}

\title[Unital Operator Spaces and Discrete Groups]
 {Unital Operator Spaces and Discrete Groups}

\author{Nikolaos Koutsonikos-Kouloumpis}

\address{
University of Patras\\
Department of Mathematics\\
26504 Patras\\
Greece}

\email{up1019669@ac.upatras.gr}

\thanks{This work was financially supported by the "Andreas Mentzelopoulos Foundation".}

\subjclass{Primary: 46L07; Secondary: 47A15, 47L25, 47L35}

\keywords{Masa bimodules, unital operator spaces, operator systems, reflexive spaces, CSL algebras, $C^*$-envelopes}

\date{September 2024}

\begin{abstract}
We introduce the trivial intersection property for concrete operator spaces and we show that a unital space with this property has no nontrivial boundary ideals. We provide various examples of such spaces, among which are strongly reflexive masa bimodules and completely distributive CSL algebras. We show that unital operator spaces acting on $\ell^2(\Gamma)$ for any set $\Gamma$, that contain the masa $\ell^\infty(\Gamma)$, possess the trivial intersection property, and we use this to prove that a unital surjective complete isometry between such spaces is a unitary equivalence. Then, we apply these results to $w^*$-closed $\ell^\infty(G)$-bimodules acting on $\ell^2(G)$ for a group $G$ and we relate them to algebraic properties of $G$.
\end{abstract}

\maketitle

\section{Introduction and Preliminaries}
\label{intro}

In this note, our spaces of interest are concrete unital operator spaces, i.e. norm-closed subspaces $\mathcal{U}\subseteq B(\Hilb)$ that contain the unit of $B(\Hilb)$. Here, $B(\Hilb)$ denotes the space of all bounded linear operators acting on a Hilbert space $\Hilb$.

In Section \ref{envelope}, we focus our attention on concrete unital operator spaces and we introduce a property called the \textbf{trivial intersection property}. It follows directly that every unital subspace $\mathcal{U}\subseteq B(\Hilb)$ with this property has no boundary ideals, hence its $C^*$-envelope $\env(\mathcal{U})$ coincides with the $C^*$-algebra $\mathcal{U}$ generates in $B(\Hilb)$, which is denoted by $C^*(\mathcal{U})$. It turns out that this property appears in several different cases. To name a few, every strongly reflexive masa bimodule has the trivial intersection property, as well as every completely distributive CSL algebra. We also show that every subspace of $B(\ell^2(\Gamma))$ that contains the masa $\ell^\infty(\Gamma)$ has the trivial intersection property and we use this to prove Theorem \ref{unitar}, which states that every unital surjective complete isometry between unital spaces that contain a discrete masa is a unitary equivalence.

Finally, in Section \ref{disc}, we apply results from Section \ref{envelope} to subspaces of $B(\ell^2(G))$, where $G$ is a group. We are interested in spaces of the form $\mmax(\Omega)$ (see relation (\ref{mmax}) in Section \ref{disc}), i.e. the space of all operators supported in a set $\Omega\subseteq G\times G$. The support of an operator was first introduced by Arveson in \cite{arveson} and generalized by Erdos, Katavolos and Shulman in \cite{katavolos}, and our definition for $\mmax(\Omega)$ is based on their work.  In fact, we mostly deal with sets of the form $E^\star$ for $E\subseteq G$, where $E^\star=\{(g,h)\in G\times G:g^{-1}h\in E\}$ and we investigate how algebraic properties of $E$ are related to algebraic and topological properties of $\mmax(E^\star)$. 

Arveson introduced in \cite{arveson} the notion of a \textbf{Commutative Subspace Lattice} (CSL). By definition, a CSL is a complete sublattice of projections acting on a Hilbert space that commute with each other and which contains the zero and the identity operator. A \textbf{CSL algebra} is any algebra of the form Alg$(\mathcal{L}):=\{T\in B(\Hilb):  P^\perp TP=0 \  \text{for each}  \ P\in\mathcal{L}\}$, where $\mathcal{L}$ is a CSL. It is well-known that $\mathscr{A}\subseteq B(\Hilb)$ is a CSL algebra if and only if it is a reflexive algebra (in the sense of Halmos \cite{halmos}) that contains a \textbf{Maximal Abelian Self-adjoint Algebra (masa)}, i.e. an abelian self-adjoint subalgebra of $B(\Hilb)$, which is not properly contained in another abelian self-adjoint subalgebra of $B(\Hilb)$. As a generalization of CSL algebras, masa bimodules have been widely studied by Arveson, Davidson, Eleftherakis, Erdos, Katavolos, Ludwig, Shulman, Todorov, Turowska et al. (see e.g. \cite{arveson}, \cite{nest}, \cite{eleftherakis}, \cite{katavolos}, \cite{ludwig}, \cite{shurowska}, \cite{shulman}, \cite{todorov}, \cite{turowska}).

The definition of the $C^*$-envelope, as well as its universal property is due to M. Hamana \cite{hamana}, although W. Arveson had already proved their existence and the universal property in several cases \cite{arv1,arv2}. For an explicit definition of the $C^*$-envelope and description of the universal property, the reader is referred to \cite[Theorem 4.3.1]{blecher}. We shall need the fact that for every concrete unital operator space $\mathcal{U}\subseteq B(\Hilb)$, its $C^*$-envelope $C^*_e(\mathcal{U})$ is ($*$-isomorphic to) $C^*(\mathcal{U})/J$ where $J$ (called the \textbf{Shilov boundary} of $\mathcal{U}$) is the largest closed two-sided ideal of $C^*(\mathcal{U})$ with the property that the canonical map 
\[
q\circ i:\mathcal{U}\rightarrow C^*(\mathcal{U})/J
\]is a complete isometry, where $i:\mathcal{U}\rightarrow C^*(\mathcal{U})$ is the inclusion map and $q$ is the quotient map from $C^*(\mathcal{U})$ onto $C^*(\mathcal{U})/J$ (any closed two sided ideal of $C^*(\mathcal{U})$ with this property is called a \textbf{boundary ideal} for $\mathcal{U}$). For a proof of the above fact, see e.g. \cite[4.3.2]{blecher}. For a further discussion on (unital) operator spaces, see \cite{blecher}, \cite{ruan}, \cite{paulsen} or \cite{pisier}.

For $A\subseteq B(\Hilb)$, we denote by $[A]$ the linear span of the set $A$, i.e. all finite linear combinations of elements in $A$. The \textbf{commutant} of $A$ is the set $A':=\{T\in B(\Hilb):TS=ST \   \text{for all} \   S\in A\}$. We also denote by $\overline{A}^{w^*}$, $\overline{A}^{WOT}$ etc. the closure of $A$ in the corresponding topologies ($w^*$-topology, Weak Operator Topology etc.).

For a set $\Gamma$, we view the elements of $\ell^2(\Gamma)$ as functions $f:\Gamma\rightarrow \mathbb{C}$. Therefore,
\[
\sum_{\gamma\in \Gamma}|f(\gamma)|^2<\infty
\]for all $f\in\ell^2(\Gamma)$. The inner product in $\ell^2(\Gamma)$ is
\[
\langle f,h\rangle =\sum_{\gamma\in \Gamma}f(\gamma)\overline{h(\gamma)}.
\]Every function $f\in\ell^\infty(\Gamma)$ induces an associated operator $M_f\in B(\ell^2(\Gamma))$ by $M_f(g)=fg$ for all $g\in\ell^2(\Gamma)$. So, by $\ell^\infty(\Gamma)$ we mean all operators of this kind. It is not so difficult to see that this is actually a masa acting on $\ell^2(\Gamma)$. For all subsets $A\subseteq \Gamma$, we write $\chi_A$ for the characteristic function on $A$, and $M_A:=M_{\chi_A}$ which is a projection in $\ell^\infty(\Gamma)$. 

In all Sections, all subspaces are assumed to be norm-closed.

\section{$C^*$-envelopes and the trivial intersection property}
\label{envelope}

In this section, we introduce a property, called the \textbf{trivial intersection property} for concrete operator spaces and we show that the $C^*$-envelope of unital subspaces $\mathcal{U}\subseteq B(\Hilb)$ with this property is ($*$-isomorphic to) $C^*(\mathcal{U})$. In Theorem \ref{enve}, we indicate some conditions, under which a space has the trivial intersection property and as we will see, Theorem \ref{enve} can be applied in order to compute the $C^*$-envelope in several different cases of unital operator spaces. 

\begin{definition}
    We say that a closed subspace $\mathcal{U}\subseteq B(\Hilb)$ has the \textbf{trivial intersection property} if for every closed two-sided ideal $J$ of $C^*(\mathcal{U})$, $\mathcal{U}\cap J=\{0\}$ implies $J=\{0\}$.
\end{definition}

We note that the trivial intersection property is similar to the already known ideal intersection property, defined for $C^*$-subalgebras rather than subspaces of $C^*$-algebras. For further details, see e.g. \cite{brown}, \cite{exel} or \cite{groupoid}.

In order to fix some notation, for two $C^*$-algebras $C,D$, by $C\cong D$ we mean that $C$ and $D$ are $*$-isomorphic. We denote by $\mathcal{K}(\Hilb)$ the set of all compact operators in $B(\Hilb)$ and for every subset $V\subseteq B(\Hilb)$ we set $\mathcal{K}(V):=\mathcal{K}(\Hilb)\cap V$.

\begin{proposition}
\label{tiproperty}
    If $\mathcal{U}\subseteq B(\Hilb)$ is a closed unital subspace that has the trivial intersection property, then $\env(\mathcal{U})\cong C^*(\mathcal{U})$.
\end{proposition}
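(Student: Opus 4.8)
The plan is to reduce the statement to showing that the Shilov boundary of $\mathcal{U}$ is trivial, and then invoke the description of the $C^*$-envelope recalled in the Introduction. Recall that $\env(\mathcal{U})\cong C^*(\mathcal{U})/J$, where $J$ is the Shilov boundary, i.e. the largest boundary ideal of $C^*(\mathcal{U})$. Thus it suffices to prove $J=\{0\}$, since then $\env(\mathcal{U})\cong C^*(\mathcal{U})/\{0\}\cong C^*(\mathcal{U})$, which is exactly what we want.

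To establish $J=\{0\}$, I would first verify that $\mathcal{U}\cap J=\{0\}$ and then apply the trivial intersection property. For the intersection: since $J$ is a boundary ideal, by definition the canonical map $q\circ i:\mathcal{U}\to C^*(\mathcal{U})/J$ is a complete isometry, in particular an isometry on the Banach space level. Now take any $u\in\mathcal{U}\cap J$. Because $i$ is just the inclusion and $u\in J$, we get $q(i(u))=q(u)=0$ in $C^*(\mathcal{U})/J$, so the isometry property yields $\|u\|=\|q(i(u))\|=0$, whence $u=0$. Therefore $\mathcal{U}\cap J=\{0\}$.

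Finally, the trivial intersection property applies directly to the closed two-sided ideal $J$ of $C^*(\mathcal{U})$: from $\mathcal{U}\cap J=\{0\}$ we conclude $J=\{0\}$, and the desired $*$-isomorphism $\env(\mathcal{U})\cong C^*(\mathcal{U})$ follows at once.

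I do not anticipate a genuine obstacle here; the result is essentially a direct translation of the definition of the trivial intersection property through the realization of the $C^*$-envelope as the quotient of $C^*(\mathcal{U})$ by its Shilov boundary. The only point requiring a little care is to use that a boundary ideal renders $q\circ i$ an \emph{isometry} (and not merely a contraction), so that no nonzero element of $\mathcal{U}\cap J$ can survive being sent into $J$ and then annihilated by $q$.
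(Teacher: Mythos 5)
Your proof is correct and follows essentially the same route as the paper: both arguments show that a boundary ideal $J$ must satisfy $\mathcal{U}\cap J=\{0\}$ (you via the isometry killing elements of $\mathcal{U}\cap J$, the paper via injectivity of $q|_{\mathcal{U}}$, which is the same observation) and then invoke the trivial intersection property to conclude $J=\{0\}$. The only cosmetic difference is that you apply this to the Shilov boundary alone, while the paper states it for an arbitrary boundary ideal; the substance is identical.
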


\begin{proof}
    Let $J$ be a boundary ideal of $\mathcal{U}$, i.e. the restriction $\pi:=q|_\mathcal{U}$ of the quotient map $q$ from $C^*(\mathcal{U})$ onto $C^*(\mathcal{U})/J$ is a complete isometry. In particular, $\pi$ is one-to-one, therefore $\mathcal{U}\cap J=\{0\}$, so by the trivial intersection property, $J=\{0\}$. This proves that the only boundary ideal of $\mathcal{U}$ is $\{0\}$, therefore $\env(\mathcal{U})\cong C^*(\mathcal{U})$.
\end{proof}

\begin{theorem}
\label{enve}
    Suppose that $\mathcal{U}\subseteq B(\Hilb)$ is a unital subspace and that there exists a subset $\mathcal{S}$ of $\mathcal{K}(C^*(\mathcal{U}))$ such that $\mathcal{S}C^*(\mathcal{U})'\subseteq\mathcal{U}$ and $I\in\overline{[\mathcal{S}+\mathcal{S}^*]}^{WOT}$. Then, $\mathcal{U}$ has the trivial intersection property.
\end{theorem}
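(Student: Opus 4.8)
The plan is to argue by contraposition: I take a closed two-sided ideal $J$ of $C^*(\mathcal{U})$ with $\mathcal{U}\cap J=\{0\}$ and show that $J=\{0\}$. The central object will be the projection $P$ obtained as the strong-operator limit of a positive, increasing, norm-bounded approximate identity $(e_\lambda)$ of $J$. Using the standard manipulations for approximate identities of ideals (that $Pa=aP=a$ for $a\in J$, and $PaP=aP=Pa$ for $a\in C^*(\mathcal{U})$), I would check that $P$ is a projection lying in the center of $C^*(\mathcal{U})''$; in particular $P\in C^*(\mathcal{U})'$. This is exactly the type of element the module hypothesis is designed to exploit, since then $sP\in\mathcal{S}\,C^*(\mathcal{U})'\subseteq\mathcal{U}$ for every $s\in\mathcal{S}$.

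The heart of the argument is then to show that $sP\in J$ as well, so that $sP\in\mathcal{U}\cap J=\{0\}$. Here the compactness hypothesis $\mathcal{S}\subseteq\mathcal{K}(C^*(\mathcal{U}))$ enters decisively: although $e_\lambda\to P$ only in the strong operator topology, the products $se_\lambda$ converge to $sP$ \emph{in norm}, because a compact operator turns a bounded strong-$*$ convergent net into a norm-convergent one (taking $e_\lambda$ self-adjoint makes $e_\lambda\to P$ strong-$*$, which is what I would feed into the finite-rank approximation of $s$). Since each $se_\lambda$ lies in the ideal $J$ and $J$ is norm-closed, the norm limit $sP$ belongs to $J$. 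Combining the two memberships, $sP=0$ for every $s\in\mathcal{S}$.

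It remains to push this from $\mathcal{S}$ to all of $\overline{[\mathcal{S}+\mathcal{S}^*]}^{WOT}$ and to reach $I$. Because $P$ is self-adjoint and central, $s^*P=(sP)^*=0$ as well, so right multiplication by $P$ annihilates the whole linear span of $\mathcal{S}\cup\mathcal{S}^*$. The map $T\mapsto TP$ is WOT-continuous, hence it annihilates the entire WOT-closure of that span; as $I$ lies in this closure by hypothesis, I conclude $P=IP=0$, whence $a=aP=0$ for every $a\in J$, i.e. $J=\{0\}$. The trivial intersection property then follows by definition.

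I expect the main obstacle to be the passage from strong to norm convergence of $se_\lambda$, i.e. verifying carefully that left multiplication by the compact $s$ converts the strong-$*$ convergence $e_\lambda\to P$ into norm convergence; this, together with locating $P$ in $C^*(\mathcal{U})'$, is where both distinctive hypotheses (compactness of $\mathcal{S}$ and the module condition $\mathcal{S}\,C^*(\mathcal{U})'\subseteq\mathcal{U}$) are genuinely used. The final WOT-density step is routine once right multiplication by $P$ is observed to be WOT-continuous.
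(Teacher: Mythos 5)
Your proof is correct, and it follows the same overall strategy as the paper's: produce a projection $P$ commuting with $C^*(\mathcal{U})$ that acts as a unit for $J$, use compactness of $\mathcal{S}$ to upgrade strong to norm convergence so that $\mathcal{S}P\subseteq J\cap\mathcal{U}=\{0\}$, pass to $\mathcal{S}^*P=0$ by centrality, and then kill $P$ with the WOT hypothesis. The genuine difference is in how $P$ is obtained. The paper works with the $w^*$-closure: $\overline{J}^{w^*}$ is a $w^*$-closed ideal of the von Neumann algebra $\overline{C^*(\mathcal{U})}^{w^*}$, hence equals $\overline{C^*(\mathcal{U})}^{w^*}p$ for a central projection $p$ (citing Takesaki), and Kaplansky density then supplies a bounded net in $J$ converging strongly to $p$. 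You instead take a positive increasing approximate identity $(e_\lambda)$ of $J$ and let $P$ be its SOT limit, verifying by hand that $P$ is a central projection; this is more elementary (no appeal to the structure theory of $w^*$-closed ideals or to Kaplansky density) and has the added virtue that your approximating net is automatically self-adjoint, so the strong-$*$ convergence needed to make left multiplication by a compact operator norm-convergent comes for free --- a subtlety you rightly flag, and which in the paper's version requires choosing the Kaplansky net self-adjoint (possible, since $p$ is self-adjoint, but left implicit there). The two projections are of course the same object, namely the unit of $\overline{J}^{w^*}$, i.e.\ the projection onto $\overline{J\Hilb}$; so the two routes prove the same thing, yours trading citations for a few standard verifications.
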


\begin{proof}
    Suppose $J$ is an ideal of $C^*(\mathcal{U})$ with $J\cap\mathcal{U}=\{0\}$ and $q$ is the quotient map from $C^*(\mathcal{U})$ onto $C^*(\mathcal{U})/J$. Then $\overline{J}^{w^*}$ is an ideal of $\alfa=\overline{C^*(\mathcal{U})}^{w^*}$ and the latter is a von Neumann algebra. By \cite[Proposition II.3.12]{takesaki}, $\overline{J}^{w^*}=\alfa p$ for some projection $p$ in the center of $\alfa$. Clearly $p\in\overline{J}^{w^*}$, so by the Kaplansky density Theorem, there exists a net $(a_i)\subseteq J$ with $\|a_i\|\leq 1$ for each $i$, such that $a_i\to p$ strongly. It is then easy to see that $Ka_i\to Kp$ in the norm topology, for each $K\in \mathcal{S}$. Since the center of $\alfa$ is contained in $C^*(\mathcal{U})'$, we have $Kp\in \mathcal{U}$. Therefore, using that $Ka_i\in J$, we see that $0=q(Ka_i)\to q(Kp)$, thus $Kp=0$ for every $K\in \mathcal{S}$. Since $p\in C^*(\mathcal{U})'$, we also have $\mathcal{S}^*p=0$, thus $p=0$ by our hypothesis that $I\in\overline{[\mathcal{S}+\mathcal{S}^*]}^{WOT}$ and $J=\{0\}$.
\end{proof}

Theorem \ref{enve} provides a large class of Corollaries/Examples of concrete unital operator spaces, with the trivial intersection property. 

\begin{corollary}
    Suppose $\mathcal{U}\subseteq B(\Hilb)$ is a unital subspace with the property that $\overline{\cpct(\mathcal{U})+\cpct(\mathcal{U})^*}^{WOT}=B(\Hilb)$. Then $\mathcal{U}$ has the trivial intersection property.  
\end{corollary}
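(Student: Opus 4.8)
The plan is to apply Theorem~\ref{enve} with the choice $\mathcal{S}=\cpct(\mathcal{U})$. Three conditions must then be checked: that $\mathcal{S}\subseteq\cpct(C^*(\mathcal{U}))$, that $\mathcal{S}\,C^*(\mathcal{U})'\subseteq\mathcal{U}$, and that $I\in\overline{[\mathcal{S}+\mathcal{S}^*]}^{WOT}$. The first is essentially definitional, since $\cpct(\mathcal{U})=\cpct(\Hilb)\cap\mathcal{U}\subseteq\cpct(\Hilb)\cap C^*(\mathcal{U})=\cpct(C^*(\mathcal{U}))$. The third is immediate from the hypothesis: as $\mathcal{S}$ and $\mathcal{S}^*$ are both subspaces, $\mathcal{S}+\mathcal{S}^*=\cpct(\mathcal{U})+\cpct(\mathcal{U})^*$, whose WOT-closure is all of $B(\Hilb)$ and in particular contains $I$.

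The one condition that requires an argument is $\mathcal{S}\,C^*(\mathcal{U})'\subseteq\mathcal{U}$, and the key observation I would make is that the density hypothesis collapses the commutant to the scalars, i.e.\ $C^*(\mathcal{U})'=\mathbb{C}I$. To see this, let $T\in C^*(\mathcal{U})'$. Since $\cpct(\mathcal{U})+\cpct(\mathcal{U})^*\subseteq C^*(\mathcal{U})$, the operator $T$ commutes with every element of $\cpct(\mathcal{U})+\cpct(\mathcal{U})^*$. Because the relation $XT=TX$ defines a WOT-closed set of operators $X$ (the map $X\mapsto XT-TX$ being WOT-continuous), $T$ must commute with the entire WOT-closure $\overline{\cpct(\mathcal{U})+\cpct(\mathcal{U})^*}^{WOT}=B(\Hilb)$, so $T$ is a scalar multiple of the identity. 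Hence $\mathcal{S}\,C^*(\mathcal{U})'=\cpct(\mathcal{U})\cdot\mathbb{C}I=\cpct(\mathcal{U})\subseteq\mathcal{U}$, the last inclusion holding because $\cpct(\mathcal{U})$ is already a subspace of $\mathcal{U}$.

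With all three hypotheses verified, Theorem~\ref{enve} yields that $\mathcal{U}$ has the trivial intersection property, which is the desired conclusion. I expect no serious obstacle: the only substantive step is the reduction $C^*(\mathcal{U})'=\mathbb{C}I$, and even this is short, since the WOT-density assumption is precisely what is needed to trivialize the commutant, while the remaining verifications are routine bookkeeping.
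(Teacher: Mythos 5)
Your proof is correct and follows essentially the same route as the paper: both apply Theorem~\ref{enve} with $\mathcal{S}=\cpct(\mathcal{U})$ after noting that the WOT-density hypothesis forces $C^*(\mathcal{U})'=\mathbb{C}I$. The paper merely states this commutant reduction without the explicit WOT-continuity argument you supply, so your write-up is just a more detailed version of the same proof.
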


\begin{proof}
    Since $\cpct(\mathcal{U})+\cpct(\mathcal{U})^*\subseteq C^*(\mathcal{U})$, we have $C^*(\mathcal{U})'=\mathbb{C}I$. Thus, for $\mathcal{S}=\cpct(\mathcal{U})$, all conditions of Theorem \ref{enve} are met. 
\end{proof}

A subspace $\mathcal{U}\subseteq B(\Hilb)$ is called \textbf{strongly reflexive} if there exists a set $\mathscr{R}$ of rank one operators such that $\mathcal{U}=$ Ref$(\mathscr{R})$ (the \textbf{reflexive hull} of $\mathscr{R}$ is defined as Ref$(\mathscr{R}):=\{T\in B(\Hilb):Tx\in\overline{[\mathscr{R}x]}\text{ for all $x\in \Hilb$}\}$). We have the following:

\begin{corollary}
\label{strongly}
    Suppose $\mathcal{U}\subseteq B(\Hilb)$ is strongly reflexive and a unital $\masa$-bimodule over a masa $\masa\subseteq B(\Hilb)$. Then $\mathcal{U}$ has the trivial intersection property.
\end{corollary}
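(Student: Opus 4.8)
The plan is to verify the hypotheses of Theorem \ref{enve} with $\mathcal{S}$ taken to be the set $\mathcal{R}_1(\mathcal{U})$ of all rank one operators lying in $\mathcal{U}$ (equivalently, one may take the generating family $\mathscr{R}$ with $\mathcal{U}=\textup{Ref}(\mathscr{R})$). Every element of $\mathcal{S}$ is compact and lies in $\mathcal{U}\subseteq C^*(\mathcal{U})$, so $\mathcal{S}\subseteq\cpct(C^*(\mathcal{U}))$. Thus two things remain to check: the module condition $\mathcal{S}\,C^*(\mathcal{U})'\subseteq\mathcal{U}$, and the density condition $I\in\overline{[\mathcal{S}+\mathcal{S}^*]}^{\textup{WOT}}$.

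First I would dispose of the module condition, which is purely formal. Since $\mathcal{U}$ is a reflexive unital $\masa$-bimodule we have $\masa\subseteq\mathcal{U}\subseteq C^*(\mathcal{U})$ (as recorded at the start of Section \ref{cap}), and taking commutants reverses inclusions, so $C^*(\mathcal{U})'\subseteq\masa'=\masa$, the last equality holding because $\masa$ is a masa. Consequently, for $r\in\mathcal{S}$ and $d\in C^*(\mathcal{U})'\subseteq\masa$ we get $rd\in\mathcal{U}\masa\subseteq\mathcal{U}$, using that $\mathcal{U}$ is a right $\masa$-module; hence $\mathcal{S}\,C^*(\mathcal{U})'\subseteq\mathcal{U}$.

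The heart of the matter is the density condition, and this is where strong reflexivity must enter. I claim that the rank one operators of $\mathcal{U}$ are $\textup{WOT}$-dense in $\mathcal{U}$, i.e. $\overline{[\mathcal{R}_1(\mathcal{U})]}^{\textup{WOT}}=\mathcal{U}$; granting this, unitality gives at once $I\in\mathcal{U}=\overline{[\mathcal{S}]}^{\textup{WOT}}\subseteq\overline{[\mathcal{S}+\mathcal{S}^*]}^{\textup{WOT}}$, and Theorem \ref{enve} then yields the trivial intersection property. To reduce the claim to a cleaner assertion I would argue by duality: a $\textup{WOT}$-continuous functional annihilating $\mathcal{R}_1(\mathcal{U})$ is implemented by a trace class operator $K$ with $\operatorname{tr}((\xi\otimes\eta^*)K)=\langle K\xi,\eta\rangle=0$ for every rank one $\xi\otimes\eta^*\in\mathcal{U}$, and what must be shown is that any such $K$ in fact annihilates all of $\mathcal{U}=\textup{Ref}(\mathcal{R}_1(\mathcal{U}))$, so in particular $\operatorname{tr}(K)=\operatorname{tr}(IK)=0$.

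I expect this last implication to be the main obstacle, and it is a genuine structural input rather than a formal manipulation. Indeed, the pointwise reflexivity $x\in\overline{[\mathcal{R}_1(\mathcal{U})x]}$ for all $x$ — which is exactly what unitality provides through $I\in\textup{Ref}(\mathcal{R}_1(\mathcal{U}))$ — does \emph{not} by itself force the $\textup{WOT}$-closed span of the rank one operators to fill out $\mathcal{U}$, since reflexivity at the level of single vectors is strictly weaker than the amplified (span) density. The correct tool is the rank one density theorem for strongly reflexive spaces: the bimodule analogue of Erdos's density theorem for nest algebras and of the Laurie--Longstaff description of completely distributive CSL algebras as the $\textup{WOT}$-closed linear span of their rank one operators. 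Precisely, a masa bimodule that coincides with the reflexive hull of its rank one operators is the $\textup{WOT}$-closed linear span of them, which is exactly the claim above. Invoking this result closes the gap; note that the adjoints in $\mathcal{S}+\mathcal{S}^*$ are not actually needed for this corollary, but they cost nothing and keep us safely within the hypotheses of Theorem \ref{enve}.
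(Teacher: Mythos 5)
Your proposal is correct and follows essentially the same route as the paper: the paper also applies Theorem \ref{enve} with $\mathcal{S}$ the compact (in particular rank one) operators in $\mathcal{U}$, uses $C^*(\mathcal{U})'\subseteq\masa'=\masa$ together with the bimodule property for the condition $\mathcal{S}C^*(\mathcal{U})'\subseteq\mathcal{U}$, and invokes the Erdos--Katavolos--Shulman rank one density theorem (the paper's reference \cite{katavolos}) to get $I\in\mathcal{U}=\overline{[\mathcal{S}]}^{\textup{WOT}}$. Your remark that pointwise reflexivity alone would not suffice, so that the cited density theorem is a genuine structural input, is exactly the point on which the paper's one-line citation rests.
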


\begin{proof}
    It follows from \cite[Theorem 2.4]{katavolos}, applied on the subspace spanned by the rank one operators in $\mathcal{U}$, that the latter is WOT-dense in $\mathcal{U}$. Therefore, $\mathcal{U}=\overline{\cpct(\mathcal{U})}^{WOT}$. Using that $\mathcal{U}$ is unital, we have $\masa\subseteq\mathcal{U}\subseteq C^*(\mathcal{U})$, hence $C^*(\mathcal{U})'\subseteq \masa'=\masa$. Thus, for $\mathcal{S}=\cpct(\mathcal{U})$, we have $\mathcal{S}C^*(\mathcal{U})'\subseteq \mathcal{U}\masa\subseteq \mathcal{U}$ and $I\in \overline{\mathcal{S}}^{WOT}$ hence, by Theorem \ref{enve}, $\mathcal{U}$ has the trivial intersection property.
\end{proof}

\begin{corollary}
\label{cor1}
    Let $\mathcal{V}\subseteq B(\Hilb)$ be any set satisfying $\mathcal{V}'\subseteq \mathcal{V}$ and let $\mathcal{U}$ be a unital subspace with
    $\mathcal{U}\subseteq \overline{\cpct(\mathcal{U})+\cpct(\mathcal{U})^*}^{WOT}$ and $\mathcal{V}\mathcal{U}\subseteq \mathcal{U}$ $($or $\mathcal{U}\mathcal{V}\subseteq\mathcal{U})$. Then $\mathcal{U}$ has the trivial intersection property.
\end{corollary}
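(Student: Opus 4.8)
The plan is to verify the hypotheses of Theorem~\ref{enve} for a suitable choice of $\mathcal{S}$, namely $\mathcal{S}=\cpct(\mathcal{U})$, just as in the preceding two Corollaries. So the goal is to check that (a) $\mathcal{S}\subseteq\cpct(C^*(\mathcal{U}))$, (b) $\mathcal{S}\,C^*(\mathcal{U})'\subseteq\mathcal{U}$, and (c) $I\in\overline{[\mathcal{S}+\mathcal{S}^*]}^{WOT}$. Condition (a) is immediate since $\cpct(\mathcal{U})=\cpct(\Hilb)\cap\mathcal{U}\subseteq\cpct(\Hilb)\cap C^*(\mathcal{U})=\cpct(C^*(\mathcal{U}))$.

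For condition (c), I would use the containment $\mathcal{U}\subseteq\overline{\cpct(\mathcal{U})+\cpct(\mathcal{U})^*}^{WOT}$ together with the fact that $\mathcal{U}$ is unital: since $I\in\mathcal{U}$, we immediately get $I\in\overline{\cpct(\mathcal{U})+\cpct(\mathcal{U})^*}^{WOT}=\overline{[\mathcal{S}+\mathcal{S}^*]}^{WOT}$, which is exactly (c).

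The main work is condition (b). Here the hypothesis $\mathcal{V}'\subseteq\mathcal{V}$ together with $\mathcal{V}\mathcal{U}\subseteq\mathcal{U}$ (or $\mathcal{U}\mathcal{V}\subseteq\mathcal{U}$) is the crucial ingredient. The key observation is that $C^*(\mathcal{U})'\subseteq\mathcal{V}'\subseteq\mathcal{V}$: indeed, if $\mathcal{V}\mathcal{U}\subseteq\mathcal{U}$ then in particular $\mathcal{V}$ contains $\mathcal{U}$ up to the module action, but more to the point, I would argue that $\mathcal{V}\subseteq C^*(\mathcal{U})$ fails in general, so instead I should exploit commutants directly. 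The clean route is: since $\mathcal{V}'\subseteq\mathcal{V}$ and $\mathcal{V}\mathcal{U}\subseteq\mathcal{U}$, for any $S\in\mathcal{S}\subseteq\mathcal{U}$ and any $C\in C^*(\mathcal{U})'$, one has $C\in\mathcal{V}'$ (because $\mathcal{U}\subseteq C^*(\mathcal{U})$ forces $\mathcal{V}\subseteq C^*(\mathcal{U})''$, hence $C^*(\mathcal{U})'\subseteq\mathcal{V}'$), so $C\in\mathcal{V}'\subseteq\mathcal{V}$, and therefore $CS\in\mathcal{V}\mathcal{U}\subseteq\mathcal{U}$; but we want $SC\in\mathcal{U}$, and since $C\in C^*(\mathcal{U})'$ commutes with $S\in\mathcal{U}\subseteq C^*(\mathcal{U})$, we get $SC=CS\in\mathcal{U}$, giving $\mathcal{S}\,C^*(\mathcal{U})'\subseteq\mathcal{U}$. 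The case $\mathcal{U}\mathcal{V}\subseteq\mathcal{U}$ is symmetric, using $SC\in\mathcal{U}\mathcal{V}\subseteq\mathcal{U}$ directly.

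The step I expect to be the genuine obstacle is pinning down the inclusion $C^*(\mathcal{U})'\subseteq\mathcal{V}'$ rigorously, since it hinges on $\mathcal{V}\subseteq C^*(\mathcal{U})''=\overline{C^*(\mathcal{U})}^{w^*}$; I would establish this by noting that $\mathcal{V}'\subseteq\mathcal{V}$ implies $\mathcal{V}''\subseteq\mathcal{V}'\subseteq\mathcal{V}$, so $\mathcal{V}=\mathcal{V}''$ is a von Neumann algebra, and then the module condition forces $C^*(\mathcal{U})\subseteq\mathcal{V}$ (each $\mathcal{V}$-module generated by $I\in\mathcal{U}$ lands inside $\mathcal{U}\subseteq\mathcal{V}$, and $\mathcal{V}$ being a $*$-algebra containing $\mathcal{U}$ must contain $C^*(\mathcal{U})$), whence $\mathcal{V}'\subseteq C^*(\mathcal{U})'$, and combined with $\mathcal{V}'\subseteq\mathcal{V}$ this is enough to run the argument above. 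Once (a), (b), (c) are in place, Theorem~\ref{enve} yields the trivial intersection property directly.
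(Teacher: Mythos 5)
Your skeleton is the same as the paper's: take $\mathcal{S}=\cpct(\mathcal{U})$, verify the three hypotheses of Theorem \ref{enve}, and conditions (a) and (c), as well as the commutation trick $SC=CS$ that handles the left-versus-right module issue, are all fine. The genuine gap is in your justification of condition (b), i.e.\ of the crucial inclusion $C^*(\mathcal{U})'\subseteq\mathcal{V}'\subseteq\mathcal{V}$. Three things go wrong there. First, $\mathcal{V}'\subseteq\mathcal{V}$ does \emph{not} imply $\mathcal{V}''\subseteq\mathcal{V}'$: taking commutants reverses inclusions, so from $\mathcal{V}'\subseteq\mathcal{V}$ you only get $\mathcal{V}'\subseteq\mathcal{V}''$. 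Indeed $\mathcal{V}=B(\Hilb)$ satisfies $\mathcal{V}'=\mathbb{C}I\subseteq\mathcal{V}$ but $\mathcal{V}''=B(\Hilb)\not\subseteq\mathbb{C}I$; and $\mathcal{V}$ need not even be an algebra (take $\mathcal{V}=\masa\cup\{T\}$ for a masa $\masa$ and an arbitrary $T$, so that $\mathcal{V}'\subseteq\masa'=\masa\subseteq\mathcal{V}$). Second, your claim that the module condition forces $\mathcal{U}\subseteq\mathcal{V}$ and hence $C^*(\mathcal{U})\subseteq\mathcal{V}$ is backwards: since $I\in\mathcal{U}$, the condition $\mathcal{V}\mathcal{U}\subseteq\mathcal{U}$ gives $\mathcal{V}=\mathcal{V}I\subseteq\mathcal{U}$, not the reverse, and $C^*(\mathcal{U})\subseteq\mathcal{V}$ is false in general (in the paper's Example following this corollary, $\mathcal{V}=\mathcal{L}'$ while $C^*(\mathcal{U})$ contains every compact operator). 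Third, even if $C^*(\mathcal{U})\subseteq\mathcal{V}$ did hold, what you deduce from it, namely $\mathcal{V}'\subseteq C^*(\mathcal{U})'$ together with $\mathcal{V}'\subseteq\mathcal{V}$, does not yield $C^*(\mathcal{U})'\subseteq\mathcal{V}$, which is what your main argument actually uses; that commutant inclusion points the wrong way.

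The irony is that the step you explicitly rejected, ``$\mathcal{V}\subseteq C^*(\mathcal{U})$ fails in general,'' is exactly the step that works, and it is the paper's entire argument: since $\mathcal{U}$ is unital, $\mathcal{V}=\mathcal{V}I\subseteq\mathcal{V}\mathcal{U}\subseteq\mathcal{U}$ (respectively $\mathcal{V}=I\mathcal{V}\subseteq\mathcal{U}\mathcal{V}\subseteq\mathcal{U}$), hence $\mathcal{V}\subseteq\mathcal{U}\subseteq C^*(\mathcal{U})$, and therefore $C^*(\mathcal{U})'\subseteq\mathcal{V}'\subseteq\mathcal{V}$. With this one-line replacement, your verification of (b) goes through exactly as you wrote it: for $C\in C^*(\mathcal{U})'\subseteq\mathcal{V}$ and $S\in\cpct(\mathcal{U})$ one has $CS\in\mathcal{V}\mathcal{U}\subseteq\mathcal{U}$ and $SC=CS$ because $S\in C^*(\mathcal{U})$ (in the case $\mathcal{U}\mathcal{V}\subseteq\mathcal{U}$, directly $SC\in\mathcal{U}$), and Theorem \ref{enve} then gives the trivial intersection property.
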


\begin{proof}
    Clearly, $\mathcal{V}\subseteq\mathcal{U}$, therefore $C^*(\mathcal{U})'\subseteq \mathcal{V}$. Then, if we set $\mathcal{S}=\cpct(\mathcal{U})$, we get $\mathcal{S}C^*(\mathcal{U})'\subseteq \mathcal{U}$ (in either one of the cases $\mathcal{V}\mathcal{U}\subseteq\mathcal{U}$, $\mathcal{U}\mathcal{V}\subseteq\mathcal{U}$). Finally, $I\in\mathcal{U}\subseteq\overline{\cpct(\mathcal{U})+\cpct(\mathcal{U})^*}^{WOT}$, therefore we can apply Theorem \ref{enve}.
\end{proof}

\begin{example}
    If $\mathcal{U}\subseteq B(\Hilb)$ is a unital $\masa$-bimodule over some masa $\masa\subseteq B(\Hilb)$, which satisfies $\mathcal{U}\subseteq\overline{\cpct(\mathcal{U})+\cpct(\mathcal{U})^*}^{WOT}$ then by Corollary \ref{cor1}, $\mathcal{U}$ has the trivial intersection property.
\end{example}

\begin{example}
    Let $\mathcal{L}$ be a CSL acting on a Hilbert space $\Hilb$ and set $\mathcal{U}=\overline{\text{Alg}(\mathcal{L})+\cpct(\Hilb)}^{\|\cdot\|}$. Since $\mathcal{L}'\subseteq $ Alg$(\mathcal{L})$, we have $\mathcal{L}'\mathcal{U}\subseteq \mathcal{U}$. Since $\mathcal{L}\subseteq\mathcal{L}'$ and $\cpct(\mathcal{U})=\cpct(\Hilb)$, we can apply Corollary \ref{cor1} with $\mathcal{V}=\mathcal{L}'$.
\end{example}

For the last case, we refer the reader to \cite[Chapter 23]{nest} for the definition of a \textbf{completely distributive lattice}.

\begin{corollary}
    Every completely distributive CSL algebra has the trivial intersection property. 
\end{corollary}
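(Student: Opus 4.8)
The plan is to exhibit a completely distributive CSL algebra $\mathcal{U}=\text{Alg}(\mathcal{L})$ as a space to which Corollary \ref{cor1} applies, with the choice $\mathcal{V}=\mathcal{L}'$. The single substantial ingredient is the Laurie--Longstaff characterization of complete distributivity: a CSL $\mathcal{L}$ is completely distributive precisely when the linear span of the rank-one operators lying in $\text{Alg}(\mathcal{L})$ is WOT-dense in $\text{Alg}(\mathcal{L})$ (see \cite{nest}). Granting this, everything else is bookkeeping about commutants, so I expect this invocation to be the only real obstacle.

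First I would record the topological hypothesis of Corollary \ref{cor1}. Since $\mathcal{U}=\text{Alg}(\mathcal{L})$ is unital and rank-one operators are compact, the density statement above yields $\mathcal{U}\subseteq\overline{\cpct(\mathcal{U})}^{WOT}\subseteq\overline{\cpct(\mathcal{U})+\cpct(\mathcal{U})^*}^{WOT}$. This is exactly the inclusion required.

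Next I would set $\mathcal{V}=\mathcal{L}'$ and verify the two algebraic conditions. For $\mathcal{V}'\subseteq\mathcal{V}$: because $\mathcal{L}$ is a commuting family of projections, the von Neumann algebra $\mathcal{L}''$ it generates is abelian, so $\mathcal{L}''\subseteq(\mathcal{L}'')'=\mathcal{L}'$, i.e. $\mathcal{V}'=\mathcal{L}''\subseteq\mathcal{L}'=\mathcal{V}$. For $\mathcal{V}\mathcal{U}\subseteq\mathcal{U}$: any $T\in\mathcal{L}'$ commutes with every $P\in\mathcal{L}$, so $TP=PT$ leaves $\text{range}(P)$ invariant, giving the standard inclusion $\mathcal{L}'\subseteq\text{Alg}(\mathcal{L})=\mathcal{U}$; since $\mathcal{U}$ is an algebra, $\mathcal{V}\mathcal{U}=\mathcal{L}'\,\text{Alg}(\mathcal{L})\subseteq\mathcal{U}$. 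With all three hypotheses of Corollary \ref{cor1} in place, the trivial intersection property follows at once.

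As the computations reduce entirely to the inclusions $\mathcal{L}\subseteq\mathcal{L}'$ and $\mathcal{L}'\subseteq\text{Alg}(\mathcal{L})$ together with the abelianness of $\mathcal{L}''$, the proof is short once Laurie--Longstaff is cited. I would also note that the same facts present $\mathcal{U}$ as a strongly reflexive unital $\masa$-bimodule over any masa $\masa$ with $\mathcal{L}\subseteq\masa\subseteq\mathcal{L}'$, so the statement could equally be deduced from the earlier corollary on strongly reflexive masa bimodules; the route through Corollary \ref{cor1} is preferable only because it needs the WOT-density of the compacts rather than the full reflexive-hull form of the characterization.
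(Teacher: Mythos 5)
Your proposal is correct and follows essentially the same route as the paper: both invoke the Laurie--Longstaff density theorem (Theorem 23.7 in \cite{nest}) to get that the compact (rank-one) operators in $\mathrm{Alg}(\mathcal{L})$ are dense, and then apply Corollary \ref{cor1} with $\mathcal{V}=\mathcal{L}'$. Your write-up merely spells out the verifications ($\mathcal{L}''\subseteq\mathcal{L}'$, $\mathcal{L}'\subseteq\mathrm{Alg}(\mathcal{L})$) that the paper leaves implicit.
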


\begin{proof}
    Let $\mathcal{U}=\textup{Alg}(\mathcal{L})$, where $\mathcal{L}$ is a completely distributive CSL. By the discussion at the end of Section 8 in \cite{erdos}, it follows that $\mathcal{U}$ is strongly reflexive. It is well-known that every CSL algebra contains a masa, hence $\mathcal{U}$ is also a masa bimodule, so we can apply Corollary \ref{strongly}.
\end{proof}

For the rest of the section, we deal with subspaces of $B(\ell^2(\Gamma))$, where $\Gamma$ is any set, which contain the discrete masa $\ell^\infty(\Gamma)$ and we shall prove the following:

\begin{theorem}
\label{unitar}
    Suppose $\mathcal{U}_1,\mathcal{U}_2$ are unital operator subspaces of $B(\ell^2(\Gamma_1))$ and $B(\ell^2(\Gamma_2))$ respectively, and each $\mathcal{U}_i$ contains the masa $\masa_i:=\ell^\infty(\Gamma_i)$. Let $\theta: \mathcal{U}_1\rightarrow \mathcal{U}_2$ be a unital surjective complete isometry. Then, $\theta$ is a unitary equivalence, i.e. there exists a unitary $W:\ell^2(\Gamma_1)\rightarrow\ell^2(\Gamma_2)$ such that $\theta(T)=WTW^*$ for all $T\in \mathcal{U}_1$. 
\end{theorem}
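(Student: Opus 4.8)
The plan is to pass from the operator-space map $\theta$ to a $*$-isomorphism of the generated $C^*$-algebras, and then to show that this $*$-isomorphism is spatially implemented. First I would record that each $\mathcal{U}_i$ has the trivial intersection property. Indeed, since $\mathcal{U}_i\supseteq\masa_i=\ell^\infty(\Gamma_i)$, it contains all rank-one diagonal projections $e_{\gamma\gamma}$ (the projection onto $\mathbb{C}\delta_\gamma$); taking $\mathcal{S}=\{e_{\gamma\gamma}:\gamma\in\Gamma_i\}$ we have $\mathcal{S}\subseteq\cpct(C^*(\mathcal{U}_i))$, while $C^*(\mathcal{U}_i)'\subseteq\masa_i'=\masa_i$ gives $\mathcal{S}\,C^*(\mathcal{U}_i)'\subseteq\masa_i\subseteq\mathcal{U}_i$, and $I=\sum_\gamma e_{\gamma\gamma}\in\overline{[\mathcal{S}]}^{WOT}$. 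Thus Theorem \ref{enve} applies and, by Proposition \ref{tiproperty}, $\env(\mathcal{U}_i)\cong C^*(\mathcal{U}_i)$ with the inclusion as the canonical embedding. Since $\theta$ and $\theta^{-1}$ are unital complete isometries with generating ranges, the universal property of the $C^*$-envelope yields surjective $*$-homomorphisms extending $\theta$ and $\theta^{-1}$; composing them and using that a $*$-homomorphism is determined on generators shows they are mutually inverse. Hence $\theta$ extends to a $*$-isomorphism $\Theta\colon C^*(\mathcal{U}_1)\to C^*(\mathcal{U}_2)$ with $\Theta|_{\mathcal{U}_1}=\theta$.

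Next I would exploit that both identity representations are multiplicity-free: as $\masa_i$ is a masa of $B(\ell^2(\Gamma_i))$ contained in $C^*(\mathcal{U}_i)$, the commutant $C^*(\mathcal{U}_i)'\subseteq\masa_i$ is abelian. A short lemma then shows that in such a representation every minimal projection is rank one: if $q$ is minimal, then $q\,C^*(\mathcal{U}_i)\,q=\mathbb{C}q$, so by Kaplansky density $q\,C^*(\mathcal{U}_i)''\,q=\mathbb{C}q$, and since the reduced commutant on $q\Hilb$ is abelian and equals $B(q\Hilb)$, we get $\dim q\Hilb=1$. Each $e_{\gamma\gamma}$ is minimal in $C^*(\mathcal{U}_1)$ (being rank one), so the $f_\gamma:=\Theta(e_{\gamma\gamma})$ are mutually orthogonal rank-one projections in $B(\ell^2(\Gamma_2))$; symmetrically the $g_\delta:=\Theta^{-1}(e_{\delta\delta})$ are rank one in $B(\ell^2(\Gamma_1))$.

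The step I expect to be the main obstacle is to show that the images of the atoms still exhaust the space, $\sum_\gamma f_\gamma=I$, because a $*$-isomorphism need not respect the strong sum $\sum_\gamma e_{\gamma\gamma}=I$. I would argue by contradiction: if some unit vector $\eta$ is orthogonal to every $f_\gamma$, then the state $\omega(a):=\langle\Theta(a)\eta,\eta\rangle$ on $C^*(\mathcal{U}_1)$ vanishes on each $e_{\gamma\gamma}$, whence $\omega(e_{\gamma\gamma}x)=\omega(x\,e_{\gamma\gamma})=0$ for all $x$. Picking $\delta_0$ with $e_{\delta_0\delta_0}\eta\neq 0$ gives $\omega(g_{\delta_0})>0$; but $g_{\delta_0}$ is the rank-one projection onto some $\mathbb{C}\zeta$ with $\zeta\in\ell^2(\Gamma_1)$, and with $E_F=\sum_{\gamma\in F}e_{\gamma\gamma}$ one has $E_F g_{\delta_0}E_F\to g_{\delta_0}$ in norm (since $E_F\zeta\to\zeta$ in the $\ell^2$-norm). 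As $\omega$ is norm-continuous and $\omega(E_F g_{\delta_0}E_F)=0$, we obtain $\omega(g_{\delta_0})=0$, a contradiction. Therefore $\sum_\gamma f_\gamma=I$, and the ranges of the $f_\gamma$ furnish an orthonormal basis $\{\eta_\gamma\}$ of $\ell^2(\Gamma_2)$ indexed by $\Gamma_1$.

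Finally I would set $W\delta_\gamma=\eta_\gamma$, a unitary, and verify $\Theta(a)=WaW^*$. Whenever a matrix entry $a_{\gamma\gamma'}=\langle a\delta_{\gamma'},\delta_\gamma\rangle$ is nonzero the matrix unit $e_{\gamma\gamma'}$ lies in $C^*(\mathcal{U}_1)$, so $f_\gamma\Theta(a)f_{\gamma'}=a_{\gamma\gamma'}\Theta(e_{\gamma\gamma'})$, and $\Theta(e_{\gamma\gamma'})$ is a partial isometry sending $\eta_{\gamma'}$ to a unimodular multiple $c_{\gamma\gamma'}\eta_\gamma$. The relations $e_{\gamma\gamma'}e_{\gamma'\gamma''}=e_{\gamma\gamma''}$ make $(c_{\gamma\gamma'})$ a groupoid cocycle on the equivalence relation $\{(\gamma,\gamma'):e_{\gamma\gamma'}\in C^*(\mathcal{U}_1)\}$, hence a coboundary $c_{\gamma\gamma'}=b_\gamma\overline{b_{\gamma'}}$; absorbing the phases by $\eta_\gamma\mapsto b_\gamma\eta_\gamma$ makes every $c_{\gamma\gamma'}=1$. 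Then $\langle\Theta(a)\eta_{\gamma'},\eta_\gamma\rangle=a_{\gamma\gamma'}=\langle WaW^*\eta_{\gamma'},\eta_\gamma\rangle$ for all $a$ and all $\gamma,\gamma'$, so $\Theta(a)=WaW^*$ on $C^*(\mathcal{U}_1)$ and in particular $\theta(T)=WTW^*$ for every $T\in\mathcal{U}_1$.
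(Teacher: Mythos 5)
Your proof is correct, and after the first step it takes a genuinely different route from the paper's. Both arguments begin the same way: the trivial intersection property (Theorem \ref{enve} applied with diagonal compact operators, exactly as in Lemma \ref{main2}) identifies $C^*(\mathcal{U}_i)$ with $\env(\mathcal{U}_i)$, and the universal property of the envelope upgrades $\theta$ to a $*$-isomorphism $\Theta\colon C^*(\mathcal{U}_1)\to C^*(\mathcal{U}_2)$. The central difficulty --- that $\Theta$ need not be normal, so it need not preserve strong sums of projections --- is then handled quite differently. The paper passes to the commutant $C^*(\mathcal{U}_1)'$, which, being a von Neumann subalgebra of an atomic masa, is atomic with atoms summing to $I$; since the same holds on the other side and $\Theta$ matches up the atoms, each block reduction is irreducible and contains the compacts (citing \cite[Corollary I.10.4]{davidson}), so the classical implementation theorem \cite[Theorem 10.4.6]{kadison} gives a unitary per block, and these are glued into $W$. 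You instead work with the diagonal atoms $e_{\gamma\gamma}$ directly: your lemma that an abelian commutant forces any projection $q$ with $qC^*(\mathcal{U}_i)q=\mathbb{C}q$ to have rank one, your state-continuity contradiction showing $\sum_\gamma\Theta(e_{\gamma\gamma})=I$, and your cocycle/coboundary phase correction amount to an elementary, self-contained re-proof of exactly the special case of the implementation theorem that the paper imports. The paper's route buys brevity; yours buys independence from the two cited classical results, and your state argument is a transparent substitute for the paper's rather terse claim that the images of the atoms still sum to the identity. One terminological caveat: \emph{minimal projection} in a $C^*$-algebra usually means having no proper nonzero subprojection in the algebra, which is a priori weaker than $qAq=\mathbb{C}q$; your argument only ever uses the latter property, and it does hold for every projection you apply it to (they are $\Theta^{\pm1}$-images of rank-one diagonal projections), so nothing breaks.
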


For the proof, we need the following Lemma, which is a direct consequence of Theorem \ref{enve} and Proposition \ref{tiproperty}:

\begin{lemma}
\label{main2}
    Suppose $\mathcal{U}$ is a unital operator space acting on $\Hilb=\ell^2(\Gamma)$ that contains the masa $\masa=\ell^\infty(\Gamma)$. Then, $\env(\mathcal{U})\cong C^*(\mathcal{U})$.
\end{lemma}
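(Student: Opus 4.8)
The plan is to obtain the conclusion as an immediate combination of the two results cited: if I can show that $\mathcal{U}$ enjoys the \textbf{trivial intersection property}, then Proposition \ref{tiproperty} yields $\env(\mathcal{U})\cong C^*(\mathcal{U})$ directly. Thus the entire task reduces to verifying the two hypotheses of Theorem \ref{enve}, namely producing a subset $\mathcal{S}\subseteq\cpct(C^*(\mathcal{U}))$ with $\mathcal{S}\,C^*(\mathcal{U})'\subseteq\mathcal{U}$ and $I\in\overline{[\mathcal{S}+\mathcal{S}^*]}^{WOT}$. The presence of the discrete masa $\masa=\ell^\infty(\Gamma)$ inside $\mathcal{U}$ is exactly what makes such an $\mathcal{S}$ available.

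The candidate I would use is the family of minimal projections of $\masa$. For each $\gamma\in\Gamma$ let $e_\gamma\in B(\ell^2(\Gamma))$ be the operator induced by the indicator function $\mathbf{1}_{\{\gamma\}}\in\ell^\infty(\Gamma)$, i.e. the orthogonal projection onto the one-dimensional subspace spanned by $\delta_\gamma$, and set $\mathcal{S}:=\{e_\gamma:\gamma\in\Gamma\}$. Each $e_\gamma$ is a rank-one projection, hence compact, and lies in $\masa\subseteq\mathcal{U}\subseteq C^*(\mathcal{U})$; therefore $\mathcal{S}\subseteq\cpct(C^*(\mathcal{U}))$, and $\mathcal{S}$ is self-adjoint, so $\mathcal{S}=\mathcal{S}^*$.

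It then remains to check the two conditions. For the first, I would observe that the chain $\masa\subseteq\mathcal{U}\subseteq C^*(\mathcal{U})$ forces $C^*(\mathcal{U})'\subseteq\masa'=\masa$, the last equality holding because $\masa$ is a masa. Since $\mathcal{S}\subseteq\masa$ and $\masa$ is an algebra, we get $\mathcal{S}\,C^*(\mathcal{U})'\subseteq\masa\,\masa\subseteq\masa\subseteq\mathcal{U}$, as required. For the second condition, the net of finite partial sums $\sum_{\gamma\in F}e_\gamma$, indexed by the finite subsets $F\subseteq\Gamma$ directed by inclusion, converges to $I$ in the strong (hence weak) operator topology, since $\langle(\sum_{\gamma\in F}e_\gamma)\xi,\eta\rangle=\sum_{\gamma\in F}\xi_\gamma\overline{\eta_\gamma}\to\langle\xi,\eta\rangle$ for all $\xi,\eta\in\ell^2(\Gamma)$. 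These partial sums lie in the linear span of $\mathcal{S}$, which coincides with the span of $\mathcal{S}+\mathcal{S}^*$ by self-adjointness, so $I\in\overline{[\mathcal{S}+\mathcal{S}^*]}^{WOT}$. Theorem \ref{enve} now applies.

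I do not expect a genuine obstacle here; the argument is really the recognition that the minimal diagonal projections form a self-adjoint family of compacts that simultaneously sits inside $\mathcal{U}$, absorbs the commutant (which is trapped inside the masa), and approximates the identity weakly. The only two points warranting care are the commutant computation $C^*(\mathcal{U})'\subseteq\masa$ and the compactness of the members of $\mathcal{S}$, both of which are immediate consequences of $\masa=\ell^\infty(\Gamma)$ being a \emph{discrete} masa with minimal projections of rank one.
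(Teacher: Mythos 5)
Your proof is correct and follows essentially the same route as the paper: the paper's proof also verifies the hypotheses of Theorem \ref{enve} with a set of compact operators inside the masa (it takes $\mathcal{S}=\cpct(\masa)$ rather than just the minimal diagonal projections) and then invokes Proposition \ref{tiproperty}. Your choice of $\mathcal{S}=\{e_\gamma:\gamma\in\Gamma\}$ and the explicit WOT-convergence of the partial sums is just a slightly more concrete instance of the same argument.
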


\begin{proof}
    Let $\mathcal{S}=\cpct(\masa)\subseteq\cpct((C^*(\mathcal{U}))$. Since $C^*(\mathcal{U})'\subseteq \masa$, we have $C^*(\mathcal{U})'\mathcal{S}\subseteq\masa\subseteq \mathcal{U}$. Furthermore, $I\in\overline{\mathcal{S}}^{w^*}$, thus by Theorem \ref{enve}, $\mathcal{U}$ has the trivial intersection property and then $\env(\mathcal{U})\cong C^*(\mathcal{U})$ by Proposition \ref{tiproperty}.
\end{proof}

\begin{proof}[Proof of Theorem \ref{unitar}]
    We set $\alfa:=C^*(\mathcal{U}_1)$ and $\vhta:=C^*(\mathcal{U}_2)$. By Lemma \ref{main2}, these are $*$-isomorphic to $C^*_e(\mathcal{U}_1)$ and $C^*_e(\mathcal{U}_2)$ respectively, hence, by the universal property of the $C^*$-envelope applied to both $\theta:\mathcal{U}_1\to\vhta$ and $\theta^{-1}:\mathcal{U}_2\to\alfa$, there exists a $*$-algebra isomorphism $\tilde{\theta}:\alfa\rightarrow \vhta$ that extends $\theta:\mathcal{U}_1\to \vhta$. Since $\masa_1\subseteq \mathcal{U}_1\subseteq \alfa$ and $\masa_2\subseteq \mathcal{U}_2\subseteq\vhta$, we have $\alfa'\subseteq \masa_1'=\masa_1\subseteq \alfa$ and similarly $\vhta'\subseteq\vhta$, hence $\alfa'$ and $\vhta'$ are in fact the centers of $\alfa$ and $\vhta$ respectively. An algebra isomorphism restricts to an isomorphism between the centers, hence $\tilde{\theta}$ maps $\alfa'$ onto $\vhta'$. Since $\alfa'\subseteq \ell^\infty(\Gamma_1)$ and $\vhta'\subseteq\ell^\infty(\Gamma_2)$, it is well-known that there exists a unique family $\{Q_i:i\in \mathcal{I}\}$ of minimal orthogonal projections in $\alfa'$ such that $\sum_iQ_i=I$. Since $\tilde{\theta}$ is an order isomorphism, it follows that $\{\tilde{\theta}(Q_i) :i\in \mathcal{I}\}$ is a set of minimal orthogonal projections in $\vhta'$ with $\sum_i\tilde{\theta}(Q_i)=I$. It is then not so hard to see that 
    \begin{gather*}
    \overline{\alfa}^{w^*}=\bigoplus_iQ_iB(\ell^2(\Gamma_1))Q_i \\
    \overline{\vhta}^{w^*}=\bigoplus_i\tilde{\theta}(Q_i)B(\ell^2(\Gamma_2))\tilde{\theta}(Q_i).
    \end{gather*}
    For each $i$, we set $\alfa_i:=Q_i\alfa Q_i\subseteq\alfa$, $\vhta_i:=\tilde{\theta}(Q_i)\vhta\tilde{\theta}(Q_i)\subseteq \vhta$. Since $Q_i\in \ell^\infty(\Gamma_1)$ and $\tilde{\theta}(Q_i)\in\ell^\infty(\Gamma_2)$, for each $i$ there exist unique subsets $A_i\subseteq\Gamma_1$ and $B_i\subseteq\Gamma_2$ such that $Q_i=M_{A_i}$ and $\tilde{\theta}(Q_i)=M_{B_i}$, so we can view $\alfa_i$ and $\vhta_i$ as $C^*$-subalgebras of $B(\ell^2(A_i))$ and $B(\ell^2(B_i))$ respectively. We observe that
    \[
    \overline{\alfa_i}^{w^*}=Q_i\overline{\alfa}^{w^*}Q_i=B(\ell^2(A_i))
    \]and by the same reasoning $\overline{\vhta_i}^{w^*}=B(\ell^2(B_i))$, so both $\alfa_i$ and $\vhta_i$ are irreducible. Moreover, since $\alfa_i$ and $\vhta_i$ contain $\ell^\infty(A_i)$ and $\ell^\infty(B_i)$ respectively, which in turn contain non-zero compact operators, by \cite[Corollary I.10.4]{davidson}, $\mathcal{K}(\ell^2(A_i))\subseteq \alfa_i$ and $\mathcal{K}(\ell^2(B_i))\subseteq\vhta_i$. Thus, by \cite[Theorem 10.4.6]{kadison}, there exists a unitary operator $W_i:\ell^2(A_i)\rightarrow \ell^2(B_i)$ such that   
    \[
    \tilde{\theta}(T)=W_iTW_i^*
    \]for every $T\in \alfa_i$. Finally, if we define $W:=\oplus_iW_i:\ell^2(\Gamma_1)\rightarrow\ell^2(\Gamma_2)$, which is unitary, we get $\tilde{\theta}(T)=WTW^*$ for every $T\in \alfa$. 
\end{proof}

\begin{remark}
\label{vniso}
    Suppose $\alfa\subseteq B(\ell^2(\Gamma_1))$, $\vhta\subseteq B(\ell^2(\Gamma_2))$ are $*$-isomorphic von Neumann algebras that contain the masas $\ell^\infty(\Gamma_1)$ and $\ell^\infty(\Gamma_2)$ respectively. Then, by Theorem \ref{unitar}, they are unitarily equivalent. In this case though, the unitary operator $U$ that induces the equivalence can be picked so that $\ell^\infty(\Gamma_2)=U\ell^\infty(\Gamma_1)U^*$. In order to see this, let $\theta:\alfa\rightarrow \vhta$ be a $*$-isomorphism. Then, following the proof of Theorem \ref{unitar}, we obtain an orthogonal family $\{Q_i:i\in \mathcal{I}\}$ of minimal projections in $\alfa'$ such that $\sum_iQ_i=I$ and $\{\theta(Q_i):i\in \mathcal{I}\}$ is also an orthogonal family of minimal projections in $\vhta'$ with $\sum_i\theta(Q_i)=I$, and since $\alfa$, $\vhta$ are von Neumann algebras,
    \begin{gather*}
        \alfa=\bigoplus_iQ_iB(\ell^2(\Gamma_1))Q_i \\
    \vhta=\bigoplus_i\theta(Q_i)B(\ell^2(\Gamma_2))\theta(Q_i).
    \end{gather*}
    For certain subsets $A_i\subseteq\Gamma_1$ and $B_i\subseteq\Gamma_2$, we can view $Q_iB(\ell^2(\Gamma_1))Q_i$ and $\theta(Q_i)B(\ell^2(\Gamma_2))\theta(Q_i)$ as $B(\ell^2(A_i))$ and $B(\ell^2(B_i))$ respectively, and again by the proof of Theorem \ref{unitar}, we get that $\ell^2(A_i)$ and $\ell^2(B_i)$ are isomorphic, therefore $A_i$ and $B_i$ have the same cardinality. Let $f_i:A_i\rightarrow B_i$ be one-to-one and onto. If $\{e_\gamma:\gamma\in\Gamma_1\}$ and $\{e_\delta:\delta\in\Gamma_2\}$ are the canonical bases of $\ell^2(\Gamma_1)$ and $\ell^2(\Gamma_2)$ respectively, for each $i$ we set $U_i:\ell^2(A_i)\rightarrow\ell^2(B_i)$ by $U_ie_\gamma=e_{f_i(\gamma)}$, for each $\gamma\in A_i$. It is clear then that $\vhta=U\alfa U^*$ and $\ell^\infty(\Gamma_2)=U\ell^\infty(\Gamma_1)U^*$, where $U=\oplus_i U_i$, and that $Ue_\gamma=e_{f(\gamma)}$, where $f(\gamma)=f_i(\gamma)$ for the unique $i$ such that $\gamma\in A_i$.
\end{remark}

\section{Discrete Groups}
\label{disc}

In this section, we shall apply results of the previous section to discrete groups. For a group $G$ and $f,h\in\ell^2(G)$, we write $fh^*$ for the rank one operator
\[
x\mapsto \langle x,h\rangle f.
\]For every $g\in G$, we set $e_g:=\chi_{\{g\}}\in \ell^2(G)$. For any subset $\Omega\subseteq G\times G$, we set
\begin{equation}
\label{mmax}
    \mmax(\Omega):=\{T\in B(\ell^2(G)):  M_F TM_E=0 \ \text{for all} \ E,F\subseteq G \ \text{with} \ E\times F \subseteq\Omega^c\}.
\end{equation}
Clearly, $\mmax(\Omega)$ is a $w^*$-closed subspace of $B(\ell^2(G))$, which is also a $\ell^\infty(G)$-bimodule. 

The following Lemma is well-known, although, to the best of our knowledge, an explicit proof is missing from the literature:   
\begin{lemma}
\label{prod}
    For any $\Omega\subseteq G\times G$, we have
    \[
    \mmax(\Omega)=\overline{[e_he_g^* : \ (g,h)\in\Omega]}^{w^*}.
    \]
\end{lemma}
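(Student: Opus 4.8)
The plan is to translate both sides of the claimed identity into statements about matrix coefficients with respect to the canonical basis $\{e_g:g\in G\}$ and then to obtain the nontrivial inclusion by a compression argument. Writing $T_{h,g}:=\langle Te_g,e_h\rangle$ for the matrix entries of $T\in B(\ell^2(G))$, I would first record the clean description
\[
\mmax(\Omega)=\{T\in B(\ell^2(G)):T_{h,g}=0 \ \text{whenever} \ (g,h)\notin\Omega\}.
\]
Indeed, $(M_\lambda TM_\kappa)_{h,g}=\chi_\lambda(h)\,T_{h,g}\,\chi_\kappa(g)$, so the condition $M_\lambda TM_\kappa=0$ is exactly the vanishing of $T_{h,g}$ for all $h\in\lambda$, $g\in\kappa$. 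Specializing to singletons $\kappa=\{g\}$, $\lambda=\{h\}$ with $(g,h)\in\Omega^c$ forces $T_{h,g}=0$; conversely, if all such entries vanish then $M_\lambda TM_\kappa=0$ for every pair with $\kappa\times\lambda\subseteq\Omega^c$, since each $(g,h)\in\kappa\times\lambda$ lies in $\Omega^c$.

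Next I would note that the rank one operator $e_h\otimes e_g^*$ is precisely the matrix unit with a single nonzero entry, equal to $1$, in position $(h,g)$. Hence each generator $e_h\otimes e_g^*$ with $(g,h)\in\Omega$ lies in $\mmax(\Omega)$ by the description above. Since $\mmax(\Omega)$ is a $w^*$-closed subspace (as observed right after (\ref{mmax})), the inclusion $\overline{[e_h\otimes e_g^*:(g,h)\in\Omega]}^{w^*}\subseteq\mmax(\Omega)$ is immediate.

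For the reverse inclusion, fix $T\in\mmax(\Omega)$ and consider the net of compressions $M_FTM_F$ indexed by the finite subsets $F\subseteq G$ ordered by inclusion. Each $M_FTM_F$ is a finite rank operator whose only possibly nonzero entries are the $T_{h,g}$ with $g,h\in F$; by the matrix description these are supported on positions $(h,g)$ with $(g,h)\in\Omega$, so
\[
M_FTM_F=\sum_{\substack{g,h\in F \\ (g,h)\in\Omega}} T_{h,g}\,(e_h\otimes e_g^*)
\]
is a finite linear combination of the generators. It then remains to show $M_FTM_F\to T$ in the $w^*$ topology. Since $M_F\to I$ strongly, for any $x,y\in\ell^2(G)$ we have $\langle M_FTM_Fx,y\rangle=\langle TM_Fx,M_Fy\rangle\to\langle Tx,y\rangle$, so $M_FTM_F\to T$ in the weak operator topology; as $\|M_FTM_F\|\le\|T\|$ the net is bounded, and on norm-bounded sets the weak operator topology coincides with the $w^*$ topology, yielding $w^*$-convergence. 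Thus $T$ lies in the $w^*$-closed span of the generators.

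The only genuinely delicate points are bookkeeping ones: keeping straight the index transposition between a pair $(g,h)\in\Omega$ and the matrix position $(h,g)$ of the corresponding operator $e_h\otimes e_g^*$, and invoking the agreement of the weak operator and $w^*$ topologies on norm-bounded sets in order to upgrade the elementary WOT-convergence of the compressions into the $w^*$-convergence that the statement requires.
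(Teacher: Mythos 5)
Your proof is correct and takes essentially the same route as the paper's: the easy inclusion is verified by checking that each generator $e_h\otimes e_g^*$ with $(g,h)\in\Omega$ satisfies the defining condition of $\mmax(\Omega)$, and the reverse inclusion comes from expanding $T\in\mmax(\Omega)$ in the matrix units $e_h\otimes e_g^*$. The only difference is one of rigor rather than substance: the paper writes $T=\sum_{g,h}\langle Te_g,e_h\rangle\,e_h\otimes e_g^*$ with the mode of convergence left implicit, whereas you justify it through the bounded net of compressions $M_FTM_F$ together with the coincidence of the weak operator and $w^*$ topologies on bounded sets, which is a welcome precision but the same underlying argument.
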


\begin{proof}
Using an identical argument as in \cite[Section 6.3, paragraph 8]{discretemasa}, it can be verified that
\begin{equation}
\label{mmaxeq}
    \mmax(\Omega)=\{T\in B(\ell^2(G)):\langle Te_g,e_h\rangle=0 \ \text{for all} \ (g,h)\in\Omega^c\}.
\end{equation}
Since $\mmax(\Omega)$ is a $w^*$-closed subspace, it follows that
\[
\overline{[e_he_g^* : \ (g,h)\in\Omega]}^{w^*}\subseteq \mmax(\Omega).
\]The reverse inclusion follows again from  (\ref{mmaxeq}) and the fact that 
\[
T=\sum_{g,h}(e_he_h^*)T(e_ge_g^*)=\sum_{g,h}\langle Te_g,e_h\rangle e_he_g^*
\]for all $T\in B(\ell^2(G))$.
\end{proof}

Now, let $E\subseteq G$. We set $E^\star:=\{(g,h)\in G\times G:g^{-1}h\in E\}$. The following proposition relates $E$ with $\mmax(E^\star)$ regarding their algebraic properties:

\begin{proposition}
\label{gr}
    Suppose $G$ is a group with identity element $1$ and $E\subseteq G$. Then:
    \begin{itemize}
        \item[(i)] $1\in E$ if and only if $\mmax(E^\star)$ is unital.
        \item[(ii)] $E=E^{-1}$ if and only if $\mmax(E^\star)$ is self-adjoint.
        \item[(iii)] $E\cdot E\subseteq E$ if and only if $\mmax(E^\star)$ is an algebra.
    \end{itemize}
    In particular, $E$ is a subgroup of $G$ if and only if $\mmax(E^\star)$ is a von Neumann algebra.
\end{proposition}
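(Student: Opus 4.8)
The plan is to reduce everything to a single entrywise criterion for membership in $\mmax(\Omega)$ and then translate each algebraic condition on $E$ into a combinatorial condition on the support set $E^\star$. First I would record the basic observation, already implicit in the proof of Lemma \ref{prod}, that for any $\Omega\subseteq G\times G$ and any $T\in B(\ell^2(G))$ one has $T\in\mmax(\Omega)$ if and only if $\langle Te_g,e_h\rangle=0$ for every $(g,h)\notin\Omega$; this is seen by specialising the rectangles in (\ref{mmax}) to singletons $\kappa=\{g\}$, $\lambda=\{h\}$ and using $M_{\{h\}}TM_{\{g\}}=\langle Te_g,e_h\rangle\,e_h\otimes e_g^*$. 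In particular $e_h\otimes e_g^*\in\mmax(\Omega)$ exactly when $(g,h)\in\Omega$, and since $(1,h)\in E^\star\iff h\in E$, the assignment $E\mapsto E^\star$ is injective. These facts are the workhorses for all three equivalences.

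For (i), I would simply note that $\langle Ie_g,e_h\rangle=\delta_{g,h}$, so by the criterion $I\in\mmax(E^\star)$ iff $(g,g)\in E^\star$ for all $g$, i.e. iff $g^{-1}g=1\in E$. For (ii), I would first establish the transpose identity $\mmax(\Omega)^*=\mmax(\Omega^{t})$, where $\Omega^{t}=\{(g,h):(h,g)\in\Omega\}$, which follows from $\langle T^*e_g,e_h\rangle=\overline{\langle Te_h,e_g\rangle}$ together with the entrywise criterion. A direct computation gives $(E^\star)^{t}=(E^{-1})^\star$, so $\mmax(E^\star)$ is self-adjoint iff $\mmax(E^\star)=\mmax((E^{-1})^\star)$, which by injectivity of $E\mapsto E^\star$ is equivalent to $E=E^{-1}$.

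Part (iii) is where the real content lies, and I expect the converse implication to be the main obstacle. For the forward direction, assuming $\mmax(E^\star)$ is an algebra, I would take $a,b\in E$ and exhibit two matrix units lying in the module, namely $e_{ab}\otimes e_a^*$ (valid since $a^{-1}(ab)=b\in E$) and $e_a\otimes e_1^*$ (valid since $1^{-1}a=a\in E$); their product equals $e_{ab}\otimes e_1^*$, whose membership forces $ab\in E$. For the converse, given $E\cdot E\subseteq E$ and $S,T\in\mmax(E^\star)$, I would compute the matrix entries of the product via $\langle STe_g,e_h\rangle=\sum_{k}\langle Te_g,e_k\rangle\langle Se_k,e_h\rangle$, a legitimate expansion using completeness of the basis $\{e_k\}$. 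A nonzero summand requires $g^{-1}k\in E$ and $k^{-1}h\in E$, whence $g^{-1}h=(g^{-1}k)(k^{-1}h)\in E\cdot E\subseteq E$; thus all entries of $ST$ off $E^\star$ vanish and $ST\in\mmax(E^\star)$ by the criterion. The care needed is precisely in justifying this entrywise composition of supports, rather than attempting to multiply the $w^*$-convergent expansions of $S$ and $T$ directly.

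Finally, for the ``in particular'' clause I would combine the three equivalences: $E$ is a subgroup precisely when $1\in E$, $E=E^{-1}$ and $E\cdot E\subseteq E$, which by (i)--(iii) holds iff $\mmax(E^\star)$ is simultaneously unital, self-adjoint and an algebra, i.e. a unital $*$-subalgebra of $B(\ell^2(G))$. Since $\mmax(E^\star)$ is always $w^*$-closed, such a subalgebra is automatically a von Neumann algebra, which yields the stated equivalence.
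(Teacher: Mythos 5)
Your proof is correct, and it diverges from the paper's in one substantive way. Parts (i), (ii) and the ``algebra $\Rightarrow E\cdot E\subseteq E$'' direction of (iii) are in substance what the paper does: the paper also tests $I$ against rectangles disjoint from $E^\star$, uses symmetry of $E^\star$ for self-adjointness, and multiplies two matrix units through the identity coordinate (it uses $(e_h\otimes e_1^*)(e_1\otimes e_{g^{-1}}^*)=e_h\otimes e_{g^{-1}}^*$ where you use $(e_{ab}\otimes e_a^*)(e_a\otimes e_1^*)=e_{ab}\otimes e_1^*$ --- a cosmetic difference). The genuine difference is in the direction ``$E\cdot E\subseteq E \Rightarrow \mmax(E^\star)$ is an algebra'': the paper invokes Lemma \ref{prod} to reduce to products of the rank-one generators, which implicitly requires knowing that closure under multiplication passes from the generators to their $w^*$-closed span (i.e.\ separate $w^*$-continuity of multiplication on $B(\ell^2(G))$), a step the paper leaves unstated; you instead verify the closure directly for arbitrary $S,T\in\mmax(E^\star)$ via the absolutely convergent entrywise expansion $\langle STe_g,e_h\rangle=\sum_k\langle Te_g,e_k\rangle\langle Se_k,e_h\rangle$ and the support calculus $g^{-1}h=(g^{-1}k)(k^{-1}h)$. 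Your route is more elementary and self-contained: it needs neither the $w^*$-density statement of Lemma \ref{prod} nor any continuity of multiplication, only the singleton specialization of the defining condition (\ref{mmax}); what the paper's route buys is economy, since Lemma \ref{prod} is proved anyway and reused later (e.g.\ in Corollary \ref{gener}). One small wording point: in (ii), passing from $\mmax(E^\star)=\mmax((E^{-1})^\star)$ to $E=E^{-1}$ uses injectivity of $\Omega\mapsto\mmax(\Omega)$ (via your rank-one membership criterion) composed with injectivity of $E\mapsto E^\star$, not the latter alone; both facts are in your first paragraph, so this is a matter of phrasing rather than a gap.
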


\begin{proof}
    \begin{itemize}
        \item[(i)]We have $1\in E$ if and only if $\{(g,g):g\in G\}\subseteq E^\star$. On the other hand, since $M_{F_1} M_{F_2} =0\Leftrightarrow F_1\cap F_2=\emptyset$, we have $I\in\mmax(E^\star)$ if and only if $F_1\cap F_2=\emptyset$ for all $(F_1\times F_2)\cap E^\star=\emptyset$, which is equivalent to the fact $\{(g,g):g\in G\}\subseteq E^\star$.
        \item[(ii)]$E=E^{-1} \Leftrightarrow$ $E^\star$ is symmetric, i.e. $(g,h)\in E^\star\Leftrightarrow (h,g)\in E^\star$. Thus $(F_1\times F_2)\cap E^\star=\emptyset $ if and only if $(F_2\times F_1)\cap E^\star=\emptyset$, which is equivalent to $\mmax(E^\star)$ being self-adjoint.
        \item[(iii)]Suppose $E\cdot E\subseteq E$. By Lemma \ref{prod}, it suffices to show that if $(g_1,h_1)$, $(g_2,h_2)\in E^\star$, the operator $(e_{h_1}e_{g_1}^*)(e_{h_2}e_{g_2}^*)\in \mmax(E^\star)$. Since
        \[
        (e_{h_1}e_{g_1}^*)(e_{h_2}e_{g_2}^*)=\langle e_{h_2},e_{g_1}\rangle e_{h_1}e_{g_2}^*
        \]it suffices to assume $g_1=h_2$. Since $g_1^{-1}h_1\in E$ and $g_2^{-1}h_2\in E$, we have
        \[
        g_2^{-1}h_2g_1^{-1}h_1=g_2^{-1}h_1\in E
        \]therefore $(g_2,h_1)\in E^\star$ and $e_{h_1}e_{g_2}^*\in \mmax(E^\star)$. Now suppose that $\mmax(E^\star)$ is an algebra. For $g,h\in E$, we have $(g^{-1},1)$, $(1,h)\in E^\star$, therefore $e_1e_{g^{-1}}^*$ and $e_he_1^*$ are in $\mmax(E^\star)$. Since the latter is an algebra, 
        \[
        e_he_{g^{-1}}^*=(e_he_1^*)(e_1e_{g^{-1}}^*)\in\mmax(E^\star)
        \]hence $(g^{-1},h)\in E^\star$, or $gh\in E$. \qedhere
    \end{itemize} \end{proof}
For every set $E\subseteq G$, we denote by $\langle E\rangle$ the subgroup generated by $E$, and for a subset $A\subseteq B(\ell^2(G))$, we denote by $W^*(A)$ the von Neumann algebra generated by $A$. The following is a corollary of Proposition \ref{gr}:
\begin{corollary}
\label{gener}
    For every nonempty set $E\subseteq G$, we have
    \[
    W^*(\mmax(E^\star))=\mmax(\langle E\rangle^\star).
    \]
\end{corollary}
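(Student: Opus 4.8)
The plan is to prove the two inclusions separately, reducing everything via Lemma \ref{prod} to statements about the rank one generators $e_h\otimes e_g^*$. Throughout, write $\mathcal{M}:=W^*(\mmax(E^\star))$, which is $w^*$-closed.

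For the inclusion $\mathcal{M}\subseteq\mmax(\langle E\rangle^\star)$, I first note that $E\subseteq\langle E\rangle$ gives $E^\star\subseteq\langle E\rangle^\star$, and inspecting (\ref{mmax}) shows that enlarging the support set can only enlarge the module (a larger $\Omega$ has a smaller $\Omega^c$, hence fewer vanishing constraints), so $\mmax(E^\star)\subseteq\mmax(\langle E\rangle^\star)$. Since $\langle E\rangle$ is a subgroup, Proposition \ref{gr} says $\mmax(\langle E\rangle^\star)$ is a von Neumann algebra; being a von Neumann algebra that contains $\mmax(E^\star)$, it must contain the von Neumann algebra $\mathcal{M}$ generated by $\mmax(E^\star)$.

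For the reverse inclusion, by Lemma \ref{prod} it suffices to show that every generator $e_h\otimes e_g^*$ with $g^{-1}h\in\langle E\rangle$ lies in $\mathcal{M}$, because $\mathcal{M}$ is $w^*$-closed. Writing $h=gw$ with $w\in\langle E\rangle$, I will prove by induction on the length $n\geq 1$ of a word $w=s_1\cdots s_n$ with each $s_i\in E\cup E^{-1}$ that $e_{gw}\otimes e_g^*\in\mathcal{M}$ for every $g\in G$; since $E\neq\emptyset$ this exhausts $\langle E\rangle$ (in particular $1=ss^{-1}$ for any $s\in E$). For $n=1$: if $w\in E$ then $e_{gw}\otimes e_g^*\in\mmax(E^\star)\subseteq\mathcal{M}$ by Lemma \ref{prod}; if $w=s^{-1}\in E^{-1}$ then the adjoint $e_g\otimes e_{gw}^*$ has support pair $(gw,g)$ with $(gw)^{-1}g=s\in E$, so it lies in $\mmax(E^\star)$ and hence $e_{gw}\otimes e_g^*$ lies in the self-adjoint algebra $\mathcal{M}$. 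For $n\geq 2$, factor $w=sw'$ with $s\in E\cup E^{-1}$ and $w'$ of length $n-1$, and use
\[
e_{gw}\otimes e_g^*=\big(e_{(gs)w'}\otimes e_{gs}^*\big)\big(e_{gs}\otimes e_g^*\big),
\]
whose first factor lies in $\mathcal{M}$ by the inductive hypothesis applied at base point $gs$ and whose second factor lies in $\mathcal{M}$ by the $n=1$ case; as $\mathcal{M}$ is an algebra, the product lies in $\mathcal{M}$.

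The only genuine work is this inductive step, and the computation powering it is that $(e_a\otimes e_b^*)(e_c\otimes e_d^*)=\langle e_c,e_b\rangle\,e_a\otimes e_d^*$ telescopes precisely when the inner indices agree, which lets one assemble a word in the generators one letter at a time while every intermediate operator stays inside $\mathcal{M}$. I expect no serious obstacle beyond bookkeeping; the points requiring care are that the factorization $w=sw'$ strictly decreases word length and that the nonemptiness of $E$ is what places the identity of $\langle E\rangle$ (and thus the diagonal projections $e_g\otimes e_g^*$) inside $\mathcal{M}$.
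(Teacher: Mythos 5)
Your proof is correct and follows essentially the same route as the paper's: both reduce to the rank one generators via Lemma \ref{prod}, obtain the easy inclusion from Proposition \ref{gr}, and then run an induction over words in $E\cup E^{-1}$, using rank one composition together with self-adjointness of the von Neumann algebra to handle letters from $E^{-1}$. The only (cosmetic) difference is that you induct on word length by peeling off the first letter and applying the hypothesis at the shifted base point $gs$, whereas the paper builds up the prefixes $e_{gx_1\cdots x_k}\otimes e_g^*$ by multiplying on the left; your explicit remark that nonemptiness of $E$ is what puts the diagonal projections $e_g\otimes e_g^*$ into $W^*(\mmax(E^\star))$ is a point the paper leaves implicit.
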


\begin{proof}
    By Proposition \ref{gr}, $\mmax(\langle E\rangle^\star)$ is a von Neumann algebra, which clearly contains $\mmax(E^\star)$, therefore $W^*(\mmax(E^\star))\subseteq\mmax(\langle E\rangle^\star)$. 
    
    For the reverse inclusion, by Lemma \ref{prod}, it suffices to prove that for every $(g,h)\in \langle E\rangle^\star$, $e_he_g^*\in W^*(\mmax(E^\star))$. We have $g^{-1}h\in\langle E\rangle$, so there exist $x_1,..,x_n\in E\cup E^{-1}$ such that $g^{-1}h=x_1..x_n$, i.e. $h=gx_1..x_n$. Hence, it suffices to show that $e_{gx_1..x_k}e_g^*\in W^*(\mmax(E^\star))$ for all $k\geq 1$ and $x_1,..,x_k\in E\cup E^{-1}$. We show this by induction:

    For $k=1$, let $x_1\in E\cup E^{-1}$. In case $x_1\in E$, $(g,gx_1)\in E^\star$, thus $e_{gx_1}e_g^*\in \mmax(E^\star)$. In case $x_1\in E^{-1}$, $(gx_1,g)\in E^\star$, which implies $e_ge_{gx_1}^*\in\mmax(E^\star)$, thus $e_{gx_1}e_g^*\in W^*(\mmax(E^\star))$.

    Now suppose the claim is true for some $ k\geq 1$. Let $x_1,..,x_{k+1}\in E\cup E^{-1}$. Then, $e_{gx_1..x_k}e_g^*\in W^*(\mmax(E^\star))$. In case $x_{k+1}\in E$, $(gx_1..x_k,gx_1..x_kx_{k+1})\in E^\star$, therefore $e_{gx_1..x_kx_{k+1}}e_{gx_1..x_k}^*\in \mmax(E^\star)$ and thus
    \[
    e_{gx_1..x_kx_{k+1}}e_g^*=(e_{gx_1..x_kx_{k+1}}e_{gx_1..x_k}^*)(e_{gx_1..x_k}e_g^*)\in W^*(\mmax(E^\star)).
    \]If $x_{k+1}\in E^{-1}$, $(gx_1..x_kx_{k+1},gx_1..x_k)\in E^\star$, so $e_{gx_1..x_k}e_{gx_1..x_kx_{k+1}}^*\in\mmax(E^\star)$ and therefore
    \[
    e_{gx_1..x_kx_{k+1}}e_g^*=(e_{gx_1..x_k}e_{gx_1..x_kx_{k+1}}^*)^*(e_{gx_1..x_k}e_g^*)\in W^*(\mmax(E^\star)).
    \]The proof is now complete.
\end{proof}

\begin{theorem}
\label{cosets}
   Suppose $H_1$, $H_2$ are subgroups of $G_1$ and $G_2$ respectively. The following are equivalent:
    \begin{itemize}
        \item[(i)] $\mmax(H_1^\star)$ and $\mmax(H_2^\star)$ are unitarily equivalent.
        \item[(ii)] $\mmax(H_1^\star)$ and $\mmax(H_2^\star)$ are $*$-isomorphic.
        \item[(iii)] $H_1$ and $H_2$ have the same cardinality and the same indices in $G_1$, $G_2$ respectively, i.e. $|H_1|=|H_2|$ and $[G_1:H_1]=[G_2:H_2]$.
    \end{itemize}
\end{theorem}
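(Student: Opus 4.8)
The plan is to reduce everything to the block structure of $\mmax(H^\star)$ for a subgroup $H\le G$. Since $g^{-1}h\in H$ if and only if $g$ and $h$ lie in the same left coset of $H$, the relation $H^\star$ is precisely the equivalence ``same left coset''; writing $G/H=\{C\}$ for the family of left cosets, we have the orthogonal decomposition $\ell^2(G)=\bigoplus_{C}\ell^2(C)$. By Lemma \ref{prod}, $\mmax(H^\star)$ is the $w^*$-closure of the span of the $e_h\otimes e_g^*$ with $g,h$ in a common coset, so it is exactly the block-diagonal von Neumann algebra $\mmax(H^\star)=\bigoplus_{C\in G/H}B(\ell^2(C))$, a direct sum of $[G:H]$ copies of $B(\Hilb)$ with $\dim\Hilb=|H|$. (That it is a von Neumann algebra also follows from Proposition \ref{gr}, as $H$ is a subgroup.) I would record, too, that each $\mmax(H_i^\star)$ contains the masa $\ell^\infty(G_i)$, since every diagonal $e_g\otimes e_g^*$ satisfies $(g,g)\in H_i^\star$.

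For (i)$\Rightarrow$(ii) there is nothing to do, a unitary equivalence $T\mapsto WTW^*$ being a $*$-isomorphism. For (ii)$\Rightarrow$(i), I would invoke Remark \ref{vniso}: both $\mmax(H_1^\star)$ and $\mmax(H_2^\star)$ are von Neumann algebras containing the discrete masas $\ell^\infty(G_1)$, $\ell^\infty(G_2)$, so any $*$-isomorphism between them is implemented by a unitary. This already settles (i)$\Leftrightarrow$(ii), and it remains to match these with the arithmetic condition (iii).

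For (ii)$\Rightarrow$(iii) I would read off two $*$-isomorphism invariants from the block decomposition. The center of $\bigoplus_{C}B(\ell^2(C))$ is $\prod_{C}\mathbb{C}\,P(C)$, whose minimal projections are exactly the coset projections $P(C)$; a $*$-isomorphism carries minimal central projections bijectively to minimal central projections, so it induces a bijection $G_1/H_1\to G_2/H_2$ and hence $[G_1:H_1]=[G_2:H_2]$. Compressing by a single minimal central projection $P(C)$ yields the factor $B(\ell^2(C))$, of type $\mathrm{I}_{|H_i|}$; since a $*$-isomorphism sends such a compression to the corresponding compression of the target, a matched pair of factors gives $B(\ell^2(C_1))\cong B(\ell^2(C_2))$, forcing $|H_1|=|C_1|=|C_2|=|H_2|$. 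Thus (iii) holds.

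For (iii)$\Rightarrow$(i) I would build the unitary by hand. From $[G_1:H_1]=[G_2:H_2]$ choose a bijection $\beta:G_1/H_1\to G_2/H_2$, and from $|H_1|=|H_2|$ (so $|C|=|\beta(C)|$ for every coset $C$) choose bijections $f_C:C\to\beta(C)$; these assemble into a bijection $f:G_1\to G_2$ carrying left cosets of $H_1$ onto left cosets of $H_2$, whence $(g,h)\in H_1^\star$ iff $(f(g),f(h))\in H_2^\star$. Letting $W:\ell^2(G_1)\to\ell^2(G_2)$ be the unitary $We_g=e_{f(g)}$, one has $W(e_h\otimes e_g^*)W^*=e_{f(h)}\otimes e_{f(g)}^*$, so $W$ sends the $w^*$-generating set of $\mmax(H_1^\star)$ from Lemma \ref{prod} onto that of $\mmax(H_2^\star)$; as conjugation by a unitary is a $w^*$-homeomorphism, $W\mmax(H_1^\star)W^*=\mmax(H_2^\star)$, which is (i). I expect the genuinely load-bearing step to be the identification of the complete invariant in (ii)$\Rightarrow$(iii): everything hinges on recognizing that, for these block-diagonal algebras, a $*$-isomorphism is forced to preserve both the number of central blocks (the index) and the common size of the blocks (the order of $H$), while the structural identification in the first paragraph and the weak$^*$-continuity check in (iii)$\Rightarrow$(i) are routine by comparison.
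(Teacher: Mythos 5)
Your proof is correct, but it takes a genuinely different route from the paper for the key implication $(ii)\Rightarrow(iii)$. The paper never writes down the block decomposition explicitly; instead it applies Remark \ref{vniso} (which rests on Theorem \ref{unitar}) to produce a unitary $U$ implemented by a point bijection $f:G_1\to G_2$ with $Ue_g=e_{f(g)}$, and then transports the coset relation through $f$: the equivalence $g^{-1}h\in H_1\iff f(g)^{-1}f(h)\in H_2$ yields a well-defined bijection $gH_1\mapsto f(g)H_2$ of coset spaces (equal indices) and the identity $f(H_1)=f(1)H_2$ (equal orders). You instead identify $\mmax(H^\star)=\bigoplus_{C\in G/H}B(\ell^2(C))$ once and for all, and read off $(iii)$ as a pair of purely algebraic $*$-isomorphism invariants: the cardinality of the set of minimal central projections gives the index, and the dimension of each corner factor (recoverable as the cardinality of a maximal orthogonal family of minimal projections, which is what the ``type $\mathrm{I}_{|H|}$'' label amounts to for infinite $|H|$) gives $|H|$. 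A consequence worth noting: your cycle $(i)\Rightarrow(ii)\Rightarrow(iii)\Rightarrow(i)$ is then entirely independent of Theorem \ref{unitar} and Remark \ref{vniso} --- your separate proof of $(ii)\Rightarrow(i)$ via Remark \ref{vniso} is logically redundant, since $(ii)\Rightarrow(iii)\Rightarrow(i)$ already closes the loop. So your argument is more elementary and self-contained (standard classification of type I von Neumann algebras with multiplicities), whereas the paper's proof is uniform with its own machinery and exhibits the theorem as an application of its main unitary-equivalence result; both are valid, and your $(iii)\Rightarrow(i)$ construction of the masa-permuting unitary coincides with the paper's.
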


\begin{proof}
     Obviously, $(i)\Rightarrow (ii)$. Now suppose that $(ii)$ holds. By Remark \ref{vniso}, there exists a bijection $f:G_1\rightarrow G_2$ and a unitary $U:\ell^2(G_1)\rightarrow\ell^2(G_2)$ such that $\mmax(H_2^\star)=U\mmax(H_1^\star)U^*$ and $Ue_g=e_{f(g)}$ for each $g\in G_1$. We have the following:
    \begin{gather*}
        g^{-1}h\in H_1\iff (g,h)\in H_1^\star \iff e_h e_g^*\in \mmax(H_1^\star) \\
        \iff U(e_h e_g^*)U^*\in\mmax(H_2^\star)\iff e_{f(h)} e_{f(g)}^*\in\mmax(H_2^\star) \\
        \iff (f(g),f(h))\in H_2^\star \iff f(g)^{-1}f(h)\in H_2.
    \end{gather*}
Using this, we see that the map
\[
\tilde{f}:G_1/H_1\rightarrow G_2/H_2
\]
\[
gH_1\mapsto f(g)H_2
\]is well-defined, one-to-one and onto. This means $[G_1:H_1]=[G_2:H_2]$. Also, by our earlier observation, $g\in H_1$ if and only if $f(1)^{-1}f(g)\in H_2$, that is $f(g)\in f(1)H_2$. Since $f$ is one-to-one and onto, we get $f(H_1)=f(1)H_2$. Therefore,
\[
|H_1|=|f(H_1)|=|f(1)H_2|=|H_2|
\]and we have proved $(ii)\Rightarrow(iii)$.

Finally, suppose that $|H_1|=|H_2|$ and $[G_1:H_1]=[G_2:H_2]$. Then, there exists a bijective map $f:G_1\rightarrow G_2$ such that $f(gH_1)=f(g)H_2$ for every $g\in G_1$. This implies $g^{-1}h\in H_1 \iff f(g)^{-1}f(h)\in H_2$ for all $g,h\in G_1$. Now, if we define $U:\ell^2(G_1)\rightarrow\ell^2(G_2)$ by $Ue_g=e_{f(g)}$, using Lemma \ref{prod} one easily observes that $\mmax(H_2^\star)=U\mmax(H_1^\star)U^*$, therefore $(iii)\Rightarrow (i)$.
\end{proof}
A direct Corollary using Lagrange's Theorem is the following:  
\begin{corollary}
\label{finite}
    Suppose $G$ is a finite group and $H_1$, $H_2$ are subgroups of $G$. The following are equivalent:
    \begin{itemize}
        \item[(i)] $\mmax(H_1^\star)$ and $\mmax(H_2^\star)$ are unitarily equivalent.
        \item[(ii)] $\mmax(H_1^\star)$ and $\mmax(H_2^\star)$ are $*$-isomorphic.
        \item[(iii)] $|H_1|=|H_2|$.
    \end{itemize}
\end{corollary}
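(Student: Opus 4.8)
The plan is to deduce this directly from Theorem \ref{cosets} by specializing to the case $G_1=G_2=G$ and then collapsing the two-part index condition into a single cardinality condition via Lagrange's Theorem. Since the hypotheses of Theorem \ref{cosets} are met (we take both ambient groups to be the same finite group $G$), the equivalences $(i)\iff(ii)$ are immediate, as these are literally the same statements appearing in the present Corollary and in Theorem \ref{cosets}. So the only content to supply is the equivalence of the present condition $(iii)$, namely $|H_1|=|H_2|$, with the condition $(iii)$ of Theorem \ref{cosets}, namely $|H_1|=|H_2|$ together with $[G:H_1]=[G:H_2]$.

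For this, I would invoke Lagrange's Theorem, which asserts that for any subgroup $H\leq G$ with $G$ finite we have $|G|=|H|\cdot[G:H]$, and hence $[G:H]=|G|/|H|$. One direction is trivial: the condition $\bigl(|H_1|=|H_2|$ and $[G:H_1]=[G:H_2]\bigr)$ obviously implies $|H_1|=|H_2|$. For the converse, suppose $|H_1|=|H_2|$; then, since $G$ is finite, the index formula gives
\[
[G:H_1]=\frac{|G|}{|H_1|}=\frac{|G|}{|H_2|}=[G:H_2],
\]
so the full condition $(iii)$ of Theorem \ref{cosets} holds automatically. Thus the two index conditions are redundant in the finite setting, and the present $(iii)$ is equivalent to the $(iii)$ of Theorem \ref{cosets}.

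Chaining these together, the three statements $(i)$, $(ii)$, $(iii)$ of the Corollary are all mutually equivalent. I do not anticipate any genuine obstacle here, as the argument is purely bookkeeping once Theorem \ref{cosets} is in hand; the single point requiring care is simply noting that finiteness of $G$ is exactly what forces $[G:H]$ to be recoverable from $|H|$, which is precisely the role played by Lagrange's Theorem and the reason the index hypothesis can be dropped.
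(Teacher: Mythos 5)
Your proposal is correct and matches the paper's intended argument exactly: the paper states this result as ``a direct Corollary using Lagrange's Theorem'' applied to Theorem \ref{cosets}, which is precisely your specialization to $G_1=G_2=G$ together with the observation that $[G:H]=|G|/|H|$ makes the index condition redundant. Nothing further is needed.
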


\begin{remark}
    In case $G$ is finite and $H_1$, $H_2$ are two subgroups isomorphic to each other, by Corollary \ref{finite}, we clearly have $\mmax(H_1^\star)\cong\mmax(H_2^\star)$. However, this is not true in general for infinite groups. For example, if $G=\mathbb{Z}$, $H_1=\mathbb{Z}$, $H_2=2\mathbb{Z}$, the groups $H_1$, $H_2$ are isomorphic, but $[G:H_1]=1$, $[G:H_2]=2$, so by Theorem \ref{cosets}, $\mmax(H_1^\star)\not\cong\mmax(H_2^\star)$. Also, $\mmax(H_1^\star)\cong\mmax(H_2^\star)$ does not mean in general that $H_1$ and $H_2$ are isomorphic (even in the finite case). For example, for $G=\mathbb{Z}_2\times\mathbb{Z}_4$, the subgroups $H_1=\langle(0,1)\rangle$, $H_2=\langle(0,2),(1,0)\rangle$ are not isomorphic, but they both have 4 elements, therefore by Corollary \ref{finite}, $\mmax(E_1^\star)\cong\mmax(E_2^\star)$.
\end{remark}

\begin{corollary}
    Let $E_1$, $E_2$ be subsets of the groups $G_1$, $G_2$ respectively, both containing the identity elements, and suppose $\mmax(E_1^\star)$ and $\mmax(E_2^\star)$ are completely isometrically isomorphic via a unital map. Then, $|\langle E_1\rangle|=|\langle E_2\rangle |$ and $[G_1:\langle E_1\rangle]=[G_2:\langle E_2\rangle]$.
\end{corollary}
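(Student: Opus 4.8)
The plan is to push the given unital complete isometry forward to the von Neumann algebras generated by the two spaces, thereby converting the statement into one about the genuine subgroups $\langle E_1\rangle$ and $\langle E_2\rangle$, and then to quote Theorem \ref{cosets}.

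First I would verify that each $\mmax(E_i^\star)$ meets the hypotheses of Theorem \ref{unitar}. Since $1\in E_i$, Proposition \ref{gr}$(i)$ shows that $\mmax(E_i^\star)$ is unital, and because $\mmax(E_i^\star)$ is an $\ell^\infty(G_i)$-bimodule containing $I$, every $M_f\in\ell^\infty(G_i)$ satisfies $M_f=M_f\cdot I\in\mmax(E_i^\star)$; hence $\mmax(E_i^\star)$ contains the discrete masa $\ell^\infty(G_i)$. The hypothesis gives a unital completely isometric isomorphism $\theta:\mmax(E_1^\star)\to\mmax(E_2^\star)$, which in particular is a unital surjective complete isometry. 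Thus Theorem \ref{unitar} applies and produces a unitary $W:\ell^2(G_1)\to\ell^2(G_2)$ with $\theta(T)=WTW^*$ for all $T$; since $\theta$ is onto, this means $W\mmax(E_1^\star)W^*=\mmax(E_2^\star)$.

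Next I would lift this spatial equivalence to the generated von Neumann algebras. Conjugation by $W$, namely $X\mapsto WXW^*$, is a normal $*$-isomorphism of $B(\ell^2(G_1))$ onto $B(\ell^2(G_2))$, and a $*$-isomorphism carries the von Neumann algebra generated by a subset onto the one generated by its image. As conjugation by $W$ sends $\mmax(E_1^\star)$ onto $\mmax(E_2^\star)$, it therefore sends $W^*(\mmax(E_1^\star))$ onto $W^*(\mmax(E_2^\star))$. By Corollary \ref{gener} these are precisely $\mmax(\langle E_1\rangle^\star)$ and $\mmax(\langle E_2\rangle^\star)$, so $\mmax(\langle E_1\rangle^\star)$ and $\mmax(\langle E_2\rangle^\star)$ are unitarily equivalent via $W$.

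Finally, since $\langle E_1\rangle$ and $\langle E_2\rangle$ are honest subgroups of $G_1$ and $G_2$, the implication $(i)\Rightarrow(iii)$ of Theorem \ref{cosets} yields at once $|\langle E_1\rangle|=|\langle E_2\rangle|$ and $[G_1:\langle E_1\rangle]=[G_2:\langle E_2\rangle]$, which is the claim. I expect the only point requiring genuine care to be the reduction step: confirming that $\mmax(E_i^\star)$ contains the masa (so that Theorem \ref{unitar} is applicable) and that passage to the generated von Neumann algebra commutes with unitary conjugation (so that Corollary \ref{gener} can be invoked on both sides simultaneously). Once these are in place, the conclusion is a direct citation of Theorem \ref{cosets}.
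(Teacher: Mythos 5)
Your proof is correct and follows essentially the same route as the paper: reduce to the generated von Neumann algebras, identify them via Corollary \ref{gener} as $\mmax(\langle E_i\rangle^\star)$, and conclude with Theorem \ref{cosets}. The only cosmetic difference is that you invoke the \emph{statement} of Theorem \ref{unitar} to get a spatial equivalence of the spaces themselves and then push it to the generated von Neumann algebras by unitary conjugation, whereas the paper cites the \emph{proof} of Theorem \ref{unitar} to obtain unitary equivalence of the $C^*$-algebras $C^*(\mmax(E_i^\star))$ and then passes to their $w^*$-closures---the two arguments amount to the same thing.
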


\begin{proof}
    By Proposition \ref{gr}, each $\mmax(E_i^\star)$ contains the identity, and since it is a $\ell^\infty(G_i)$-bimodule, it contains $\ell^\infty(G_i)$. Therefore, by (the proof of) Theorem \ref{unitar}, the $C^*$-algebras $C^*(\mmax(E_1^\star))$, $C^*(\mmax(E_2^\star))$ are unitarily equivalent, hence so are their $w^*$-closures, i.e. $W^*(\mmax(E_1^\star))$ and $W^*(\mmax(E_2^\star))$. By Corollary \ref{gener}, we get $\mmax(\langle E_1\rangle^\star)\cong\mmax(\langle E_2\rangle^*)$ and the conclusion follows from Theorem \ref{cosets}.
\end{proof}


\subsection*{Acknowledgment}
The author would like to thank Professor G. K. Eleftherakis for his great guidance and his suggestions for this note, as well as Professor A. Katavolos for his very useful observations.
\bigskip

\subsection*{Declarations}
\smallskip
\textbf{Data availability statement} Data sharing is not applicable to this article as no data sets were generated or analyzed during this study.
\bigskip

\noindent
\textbf{Conflict of interest} The author declares that there are no conflicts of interest.

\end{document}